\def\@seccntformat#1{\csname the#1\endcsname.\ } 
\def\@biblabel#1{#1.} 
\newcounter{language}
\date{}
\newenvironment{proof}[1][\hspace{-1.0ex}]%
{\par\addvspace{1mm}{\sc Proof\hspace{1.0ex}{#1}.} }%
{\quad$\blacktriangle$\par\addvspace{1mm}}
\newif\ifNoRemark
\def\addtheorem#1#2#3#4{
\ifthenelse{\equal{#2}{}}{}%
{\ifthenelse{\expandafter\isundefined\csname the#2\endcsname}{\newcounter{#2}}{}}
\newenvironment{#1}[1][\global\NoRemarktrue]
{\par\addvspace{2mm plus 0.5mm minus 0.2mm}\noindent 
{\bf #3}\ifthenelse{\equal{#2}{}}{}%
{\refstepcounter{#2}{\bf ~\csname the#2\endcsname}}%
{\bf \vphantom{##1}\ifNoRemark.\ \else\ (##1).\fi}\begingroup #4}%
{\endgroup\par\addvspace{1mm plus 0.5mm minus 0.2mm}\global\NoRemarkfalse}
\expandafter\newcommand\csname b#1\endcsname{\begin{#1}}
\expandafter\newcommand\csname e#1\endcsname{\end{#1}}
}
\title{On the number of resolvable Steiner triple systems \\ of small $3$-rank%
\thanks{This research is supported by
National Natural Science Foundation of China (61672036),
Excellent Youth Foundation of Natural Science Foundation of Anhui Province (1808085J20) and the
Program of fundamental scientific researches of the Siberian Branch of the Russian Academy of Sciences
No.I.5.1. (proj. No.0314-2019-0016).}
}
\author{
M. Shi, L. Xu%
\thanks{M. Shi and L. Xu are with the School of Mathematical Sciences, Anhui University, Hefei, 230601, China}%
,
D. S. Krotov%
\thanks{D. Krotov is with the Sobolev Institute of Mathematics, Novosibirsk, 630090, Russia}
}
\newcommand\aA{\mathcal A}
\newcommand\bB{\mathcal B}
\newcommand\cC{\mathcal C}
\newcommand\gG{\mathcal G}
\newcommand\tT{\mathcal T}
\newcommand\pP{\mathcal P}
\newcommand\STS{{\rm STS}}
\newcommand\TD{{\rm TD}}
\newcommand\AG{{\rm AG}}
\begin{document}
\thispagestyle{empty}
\maketitle

\begin{abstract}
In a recent work,
Jungnickel, Magliveras, Tonchev, and Wassermann
derived an overexponential lower bound on
 the number of nonisomorphic resolvable Steiner triple systems (STS)
 of order $v$, where $v=3^k$, and $3$-rank $v-k$.
 We develop an approach to generalize this bound
 and estimate the number of isomorphism classes
 of STS$(v)$ of rank $v-k-1$ for an arbitrary $v$ of form $3^kT$.
\end{abstract}

\section{Introduction}

A \emph{Steiner triple system} of order $v$, or \emph{\STS$(v)$},
is a pair $(S,\bB)$ from a finite set $S$
(called the \emph{support},
or the \emph{point set}, of the \STS)
of cardinality $v$ and a collection $\bB$ of $3$-subsets of $S$,
called \emph{blocks}, such that
every two distinct elements of $S$ meet in exactly one block.
A transversal design \TD$(k,w)$
is a triple $(S, \gG, \bB)$ that consists of a point set $S$ of cardinality $kw$,
a partition $\gG$ of $S$ into $k$ subsets, \emph{groups}, of cardinality $w$,
and a collection $\bB$ of $k$-subsets of $S$, \emph{blocks},
such that every block intersects every group in exactly one point and every two points in different groups
meet in exactly one block.
It is convenient to identify the system, STS or TD, with its block set
(indeed, the support and, in the case of \TD, the groups are uniquely determined by the block set).
Two systems, STS or TD, are called \emph{isomorphic}
if there is a bijection between their supports,
an \emph{isomorphism},
that sends the blocks of one system to the blocks of the other.
An isomorphism of a system $\bB$ to itself is called an \emph{automorphism};
the set of all automorphisms
of $\bB$ is denoted by $\mathrm{Aut}(\bB)$.

Given a system, STS or TD, with the support $S$, a \emph{parallel class} is a partition of $S$
into blocks. A partition of the system into parallel classes
is called a \emph{resolution} or \emph{parallelism}.
Note that a resolution of an STS$(v)$ consists of $(v-1)/2$ parallel classes,
while a resolution of an TD$(3,w)$ has $w$ parallel classes.
A system that admits at least one resolution is called \emph{resolvable}.
The problem of existence of resolvable STS of order $6n+3$
was suggested by Kirkman in
\cite{Kirkman:1847} and completely solved by
Ray-Chaudhuri and Wilson in 1971 \cite{RChWil:kirkman} (as mentioned in \cite[p.13]{HCD}, the problem
``had in fact already been solved by the Chinese mathematician Lu Jiaxi at least eight
years previously, but the solution had remained unpublished because of the political
upheavals of the time'').
On the other hand, the existence of infinite families of non-resolvable Steiner triple systems
of order divisible by $3$ was established in~\cite{LiRees:2005}.

For any prime $p$, the $p$-rank of a Steiner triple system $(\{0,\ldots,v-1\},\bB)$
is the dimension of the linear span over GF$(p)$ of the set of characteristic $\{0,1\}$-vectors
$(b_0,b_1,\ldots,b_{v-1})$, $b_i=1 \Leftrightarrow i\in B$ of the blocks $B\in \bB$.
As shown in \cite{DHV:1978}, the $p$-rank of every STS$(v)$ is $v$
for all prime $p$ except $2$ and $3$.
A sequence of papers
is devoted to the enumeration of Steiner triple systems in accordance
with the $2$- and $3$-rank, see a survey in \cite{SXK:2019}.
In \cite{JMTW:STS27}, Jungnickel et al. obtained a computer-aided classification
of STS$(27)$ of the next-to-minimum $3$-rank $24$. The authors observed that all such systems
are resolvable and proved this property theoretically for any STS$(3^k)$ of $3$-rank $3^k-k$.
As a consequence, they derived a lower bound on the number of nonisomorphic resolvable STS$(3^k)$.
The goal of the current research is to generalize this bound to an arbitrary order of form $3^kT$,
$T$ odd. More specifically, we evaluate the number of resolvable STS$(3^kT)$ of rank $3^kT-k-1$,
$k\to\infty$,
for every odd $T$ except the case when $T\equiv 1 \bmod 6$ is not a prime power
(which could be resolved if a resolvable STS$(3T)$ of rank $3^kT-2$ is constructed).

The next section contains some preliminary results.
In Section~\ref{s:min}, we consider the case when $T\equiv 15 \bmod 18$ (Theorem~\ref{th:1}) or
$T\equiv 1 \bmod 6$ (Theorem~\ref{th:2}); that is, when $v-k-1$ is the minimum possible $3$-rank
of STS$(v)$, $v=3^k T$. The remaining case  $T\equiv 3,9 \bmod 18$ are solved in Section~\ref{s:nonmin},
Theorem~\ref{th:1'}.
In the conclusion section, we consider some further research topics.

\section{Preliminary results}

Denote by $G(v,k)$ the linear span, over GF$(3)$, of the all-one vector
and the rows of the $k\times v$ matrix that consists of all possible columns ordered lexicographically,
each column occurring $v/3^k$ times.

Denote by \AG$(k)$ the Steiner triple system $(S,\aA)$ with the point set $S=\{0,\ldots,3^k-1\}$
and the block set $\aA = \{\{a,b,c\} | t(a)+t(b)+t(c)=0\}$, where $t(0)$, \ldots, $t(3^k-1)$ are
the ternary
$k$-tuples ordered lexicographically. \AG$(k)$ is known as the \emph{affine geometry} (over GF$(3)$); trivially, it has a resolution,
where for every block $\{a,b,c\}$, all blocks $\{a',b',c'\}$ such that $t(a')-t(a)=t(b')-t(b)=t(c')-t(c)$
belong to the same parallel class. We call it the \emph{standard resolution} of \AG$(k)$.

\begin{proposition}[\cite{JunTon:18+}]\label{p:any}
 A Steiner triple system of order $v$ has $3$-rank at most $v-k-1$
 if and only if it is isomorphic to an STS orthogonal to $G(v,k)$.
\end{proposition}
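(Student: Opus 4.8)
The plan is to replace the condition ``$\bB$ is isomorphic to an STS orthogonal to $G(v,k)$'' by an equivalent combinatorial condition on $\bB$, and then to verify that condition using the dual of the block code. For a Steiner triple system $\bB$ on a point set $S$ write $C\subseteq {\rm GF}(3)^{S}$ for the ${\rm GF}(3)$-linear span of the characteristic vectors of the blocks, so that the $3$-rank of $\bB$ equals $\dim C$. Two preliminary facts: $\mathbf 1\in C^{\perp}$, because every block has exactly $3\equiv 0\pmod 3$ points; and $\dim G(v,k)=k+1$, because the $k$ rows of the defining matrix are linearly independent over ${\rm GF}(3)$ while $\mathbf 1$ is not in their span. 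The latter fact gives the easy direction at once: if $\bB$ is orthogonal to $G(v,k)$, then $C\subseteq G(v,k)^{\perp}$, hence $\dim C\le v-(k+1)=v-k-1$, and the $3$-rank is an isomorphism invariant.

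For the converse, I record the translation first. A coordinate permutation carries $G(v,k)$ to the span of $\mathbf 1$ together with $k$ vectors that are constant on the parts of a partition of $S$ into $3^{k}$ sets of size $v/3^{k}$, the common value of the $i$th of these vectors on a part being the $i$th coordinate of a bijective labelling of the parts by ${\rm GF}(3)^{k}$; conversely every such labelled partition arises this way. Since $\mathbf 1\in C^{\perp}$ automatically, it follows that $\bB$ is isomorphic to an STS orthogonal to $G(v,k)$ if and only if there is a surjection $\phi\colon S\to {\rm GF}(3)^{k}$ all of whose fibres have size $v/3^{k}$ and with $\phi(a)+\phi(b)+\phi(c)=0$ for every block $\{a,b,c\}$ of $\bB$; this last condition just says that each of the $k$ coordinate functions of $\phi$ lies in $C^{\perp}$, equivalently that $\phi$ sends each block of $\bB$ to a line of $\AG(k)$ or to a point.

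Now assume $\bB$ has $3$-rank at most $v-k-1$, that is $\dim C^{\perp}\ge k+1$. Using $\mathbf 1\in C^{\perp}$, pick $w_{1},\dots,w_{k}\in C^{\perp}$ with $\mathbf 1,w_{1},\dots,w_{k}$ linearly independent and set $\phi=(w_{1},\dots,w_{k})$; the block-sum condition is automatic since each $w_{i}\in C^{\perp}$. To see that $\phi$ is surjective, observe that the image $\phi(S)$ is closed under taking the third point of a line of $\AG(k)$: if $p\in\phi^{-1}(x)$ and $q\in\phi^{-1}(y)$ with $x\ne y$, the block of $\bB$ through $p$ and $q$ has its third point in $\phi^{-1}(-x-y)$. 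An elementary check (translate a point of the set to $0$; it is then closed under addition and under multiplication by scalars) shows that any line-closed subset of ${\rm GF}(3)^{k}$ is an affine subspace; and the affine hull of $\phi(S)$ has dimension $\dim\langle\mathbf 1,w_{1},\dots,w_{k}\rangle-1=k$ by the choice of the $w_{i}$, so $\phi(S)={\rm GF}(3)^{k}$.

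It remains to show that all fibres of $\phi$ have the same size, and I expect this to be the crux of the argument. By the block-sum condition, every block of $\bB$ either lies inside one fibre of $\phi$ or meets exactly three fibres, with one point in each, whose labels form a line of $\AG(k)$ --- a block meeting a fibre in two points would be contained in it. Fix a line $\{x,y,z\}$ of $\AG(k)$ and a point $p\in\phi^{-1}(x)$: the map sending $q\in\phi^{-1}(y)$ to the third point of the block of $\bB$ through $p$ and $q$ is an injection $\phi^{-1}(y)\hookrightarrow\phi^{-1}(z)$ (distinct $q$'s lie in distinct blocks through $p$, hence yield distinct third points), so $|\phi^{-1}(y)|\le|\phi^{-1}(z)|$; by symmetry the three fibres over a line of $\AG(k)$ all have the same size. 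Since $\phi$ is surjective, any two points of ${\rm GF}(3)^{k}$ lie on a line whose three fibres are all nonempty, so all $3^{k}$ fibres have the same size, namely $v/3^{k}$; this supplies the required $\phi$. The remaining ingredients are routine linear algebra; the substantive point is that the rigid block structure of a Steiner triple system transports fibre sizes, with equality, along the lines of $\AG(k)$ once $\phi$ is onto, and that surjectivity of $\phi$ follows from the ``line-closed implies affine subspace'' fact together with the rank condition on $w_{1},\dots,w_{k}$.
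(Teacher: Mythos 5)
Your proof is correct. Note that the paper itself gives no proof of Proposition~\ref{p:any}: it is quoted from \cite{JunTon:18+}, so there is no in-paper argument to match against. Your reconstruction is a sound, self-contained derivation: the easy direction via $\dim G(v,k)=k+1$ and invariance of rank; the converse by choosing $w_1,\dots,w_k$ in the dual code with $\mathbf 1,w_1,\dots,w_k$ independent, forming the fibre map $\phi=(w_1,\dots,w_k)$, proving surjectivity from the fact that a line-closed subset of $\mathrm{GF}(3)^k$ is an affine subspace whose hull must be everything by the independence of the $w_i$, and proving equal fibre sizes by transporting fibres along blocks through a fixed point (the injection $q\mapsto$ third point is indeed a bijection by the uniqueness of the block through two points). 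This is in substance the same dual-code analysis used in the cited source, and it closely parallels the technique the present paper deploys later in its proof of Proposition~\ref{p:any2}, where the column set of a dual generator matrix is shown to be closed under $(a,b)\mapsto -a-b$ and to have equal column multiplicities via exactly the same third-point correspondences; you phrase it through the fibre map $\phi$ rather than through matrix columns. A small bonus of your route is that it re-derives, rather than assumes, that $3^k$ divides $v$ when the $3$-rank is at most $v-k-1$. The only places you wave at ``routine linear algebra'' (that a proper affine hull would force a dependence $\sum_i\lambda_i w_i=c\mathbf 1$, and that a block meeting a fibre twice lies inside it) are indeed routine, so I see no gap.
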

\begin{proposition}[\cite{JunTon:18+}]\label{p:standard}
Let $v = 3^k T$.
Denote $S_i=\{iT,iT+1,\ldots,(i+1)T-1\}$, $i=0,\ldots,3^k-1$.
 An STS $(S,\bB)$ is orthogonal to  $G(v,k)$ if and only if
\begin{equation}\label{eq:3}
 \bB=\bigcup_{i=0}^{3^k-1} \bB_{i} \cup \bigcup_{B\in \AG(k)} \tT_B,
\end{equation}
where $(S_i,\bB_i)$ is an STS, $i=0,\ldots,3^k-1$,
and $(\{S_{i_1},S_{i_2},S_{i_3}\}, \tT_{\{i_1,i_2,i_3\}})$ is a TD,
$\{i_1,i_2,i_3\} \in \AG(k)$.
\end{proposition}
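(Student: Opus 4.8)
The plan is to decode the orthogonality condition block by block and then match the STS axioms against the decomposition \eqref{eq:3}. Write $M$ for the $k\times v$ matrix whose column indexed by a point $p\in S_i$ is the ternary $k$-tuple $t(i)$, so that $G(v,k)$ is spanned by the all-one vector together with the rows of $M$. For a block $\{x,y,z\}$ with $x\in S_a$, $y\in S_b$, $z\in S_c$, the characteristic vector $e_x+e_y+e_z$ is automatically orthogonal to the all-one vector, and its inner product with the $j$-th row of $M$ equals the $j$-th coordinate of $t(a)+t(b)+t(c)$ over $\mathrm{GF}(3)$. Hence the block is orthogonal to $G(v,k)$ if and only if $t(a)+t(b)+t(c)=0$. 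I would then run a short case analysis of this equation: if $a=b=c$ it reads $3t(a)=0$ and holds trivially; if exactly two of $a,b,c$ coincide it forces the third to coincide as well, a contradiction; so the only remaining possibility is that $a,b,c$ are pairwise distinct, in which case $t(a)+t(b)+t(c)=0$ says precisely $\{a,b,c\}\in\AG(k)$.

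For the forward implication I would take an STS $(S,\bB)$ orthogonal to $G(v,k)$ and sort its blocks by the groups they meet. By the previous paragraph every block either lies inside a single $S_i$ or meets three pairwise distinct groups $S_{i_1},S_{i_2},S_{i_3}$ with $\{i_1,i_2,i_3\}\in\AG(k)$; set $\bB_i$ to be the blocks of the first kind contained in $S_i$ and $\tT_{\{i_1,i_2,i_3\}}$ the blocks of the second kind meeting $S_{i_1},S_{i_2},S_{i_3}$. A block meeting some $S_i$ in two or more points cannot be of the second kind, so it is contained in $S_i$; therefore every pair of points of $S_i$ is covered by a block of $\bB_i$, and since $\bB$ covers each pair once, $(S_i,\bB_i)$ is an $\STS(T)$. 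Likewise, a pair of points in distinct groups $S_{i_1},S_{i_2}$ is covered by a unique block of $\bB$, necessarily one meeting three distinct groups; this pins down a third group $S_{i_3}$ with $\{i_1,i_2,i_3\}\in\AG(k)$, and, as each block of $\tT_{\{i_1,i_2,i_3\}}$ meets each of these groups in exactly one point, $(\{S_{i_1},S_{i_2},S_{i_3}\},\tT_{\{i_1,i_2,i_3\}})$ is a $\TD(3,T)$.

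For the converse I would start from arbitrary systems $(S_i,\bB_i)$ and $\tT_B$ of the stated types, build $\bB$ via \eqref{eq:3}, and verify the pair axiom directly. Two points in the same $S_i$ are covered exactly once, namely by $\bB_i$, and by nothing else, since all other pieces of \eqref{eq:3} meet $S_i$ in at most one point. Two points in distinct groups $S_{i_1},S_{i_2}$ determine a unique block $\{i_1,i_2,i_3\}$ of $\AG(k)$ --- here I use that $\AG(k)$ is itself an $\STS(3^k)$ on $\{0,\dots,3^k-1\}$ --- and are then covered exactly once by $\tT_{\{i_1,i_2,i_3\}}$ and by no other piece. Thus $(S,\bB)$ is an $\STS(v)$, and it is orthogonal to $G(v,k)$ because every one of its blocks either is contained in a single $S_i$ (sum of ternary labels $3t(i)=0$) or corresponds to a block of $\AG(k)$ (sum $=0$ by definition), which by the first paragraph is exactly the orthogonality condition.

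The arithmetic here is harmless; the step I would be most careful about is the bookkeeping in the forward direction --- ensuring the case analysis of $t(a)+t(b)+t(c)=0$ is genuinely exhaustive, that ``a block meets two groups'' forces ``a block lies in one group'', and hence that the families $\{\bB_i\}_i$ and $\{\tT_B\}_{B\in\AG(k)}$ partition $\bB$ with no pair of points double-covered or missed. Everything else is a direct check against the definitions of STS and TD.
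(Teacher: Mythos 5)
Your argument is correct. Note that the paper itself gives no proof of Proposition~\ref{p:standard}: it is quoted from the cited reference of Jungnickel and Tonchev, so there is nothing internal to compare against. Your proof is a sound self-contained verification of exactly the kind the citation stands in for: the reduction of orthogonality to the condition $t(a)+t(b)+t(c)=0$ on the group labels is the right first step (orthogonality to the all-one vector being automatic for triples over $\mathrm{GF}(3)$), and your case analysis is exhaustive --- in particular the case of exactly two coinciding labels correctly collapses, since $2t(a)+t(c)=0$ forces $t(c)=t(a)$ and hence $c=a$ because $i\mapsto t(i)$ is a bijection on group indices. The forward direction then correctly uses that a block containing two points of one $S_i$ must lie entirely in $S_i$, and that the third group of a cross block is uniquely determined by $t(i_3)=-t(i_1)-t(i_2)$, which gives both the STS property of each $\bB_i$ and the TD property of each $\tT_B$; the converse direction checks the pair axiom and orthogonality directly, using that $\AG(k)$ is itself an STS on the group indices and that transversal blocks meet any $S_i$ in at most one point. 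No gaps.
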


\begin{corollary}\label{c:min}
 Let $v=3^t T'$.
 If $T'\equiv 1 \bmod 6$, then the minimum possible $3$-rank of \STS$(v)$ is $v-t-1$.
 If $T'\equiv 5 \bmod 6$, then the minimum possible $3$-rank of \STS$(v)$ is $v-t$.
\end{corollary}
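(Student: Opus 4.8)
The plan is to extract the minimum $3$-rank directly from Propositions~\ref{p:any} and~\ref{p:standard}, combined with two classical facts: an STS$(n)$ exists if and only if $n\equiv 1$ or $3\pmod 6$, and a TD$(3,w)$ --- being equivalent to a Latin square of order $w$ --- exists for every $w\ge 1$. First I would record that, since $T'\equiv 1$ or $5\pmod 6$, the number $T'$ is coprime to $6$; hence $3^t$ is exactly the largest power of $3$ dividing $v$, and $G(v,k)$, which is defined only when $3^k\mid v$, is available precisely for $0\le k\le t$.

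Next, by Proposition~\ref{p:any} an STS$(v)$ has $3$-rank at most $v-k-1$ if and only if it is isomorphic to an STS orthogonal to $G(v,k)$, and by Proposition~\ref{p:standard} such an STS exists if and only if both an STS$(v/3^k)$ and a TD$(3,v/3^k)$ exist. The TD always exists, so the only obstruction is whether $v/3^k=3^{t-k}T'$ is $\equiv 1$ or $3\pmod 6$. For $k\le t-1$ this number is odd and divisible by $3$, hence $\equiv 3\pmod 6$, so an STS orthogonal to $G(v,k)$ exists; for $k=t$ the number is $T'$, so an STS orthogonal to $G(v,t)$ exists if and only if $T'\equiv 1\pmod 6$.

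Assembling this: if $T'\equiv 1\pmod 6$, an STS$(v)$ orthogonal to $G(v,t)$ exists, giving the upper bound $v-t-1$ on the minimum $3$-rank; and since $G(v,t+1)$ is not defined, Proposition~\ref{p:any} shows no STS$(v)$ has $3$-rank at most $v-(t+1)-1=v-t-2$, so the minimum equals $v-t-1$. If $T'\equiv 5\pmod 6$, then no STS$(v)$ is orthogonal to $G(v,t)$ (there is no STS$(T')$), so by Proposition~\ref{p:any} no STS$(v)$ has $3$-rank at most $v-t-1$; but $3T'\equiv 3\pmod 6$, so an STS$(3T')$ and a TD$(3,3T')$ exist and, for $t\ge 1$, Proposition~\ref{p:standard} gives an STS$(v)$ orthogonal to $G(v,t-1)$, whose $3$-rank is at most $v-(t-1)-1=v-t$; hence the minimum equals $v-t$.

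I do not expect a real obstacle: being a corollary, the argument is bookkeeping with the two propositions and the $6$-divisibility of $v/3^k$. The one point needing care is the lower-bound half, where one relies on the convention implicit in Proposition~\ref{p:any} that for $3^k\nmid v$ there is no STS$(v)$ orthogonal to $G(v,k)$, which forces the $3$-rank to exceed $v-k-1$. One should also note the degenerate case $t=0$, $T'\equiv 5\pmod 6$: then $v\equiv 5\pmod 6$, no STS$(v)$ exists, and the statement holds vacuously.
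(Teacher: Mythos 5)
Your argument is correct and follows essentially the same route as the paper's own proof: Proposition~\ref{p:standard} (together with Proposition~\ref{p:any}) reduces the existence of an STS$(v)$ of $3$-rank at most $v-k-1$ to the existence of an STS$(v/3^k)$, and the two congruence cases are then settled exactly as you do. Your additional remarks --- the existence of TD$(3,w)$ for all $w$, the implicit requirement $3^k\mid v$ behind the lower-bound half, and the vacuous case $t=0$, $T'\equiv 5\bmod 6$ --- merely make explicit what the paper's terser proof leaves implicit.
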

\begin{proof}
 By Proposition~\ref{p:standard},
 a Steiner triple system of order $v=3^k T$ has $3$-rank at most $v-k-1$
 exists if and only if an \STS$(T)$ exists.
 If $T'\equiv 1 \bmod 6$, then this is true with $T=T'$ and $k=t$.
 If $T'\equiv 5 \bmod 6$, then this is true with $T=3T'$ and $k=t-1$,
 but false for $T=T'$.
\end{proof}

The following lemma was essentially proved in \cite{LuShi};
 we need to formulate the statement in more general form.
\begin{lemma}\label{l:resol}
Assume that the set of points $S$ is divided into $n$ groups $S_0$, \ldots, $S_{n-1}$ of size $m$,
and we are given
\begin{itemize}
 \item an STS $(\{S_0,\ldots,S_{n-1}\},\aA)$ with the groups as the points;
 \item for every group $S_i$, $i=0,\ldots,n-1$, an STS $(S_i,\bB_i)$ with the support $S_i$;
 \item for every block $B=\{S_i,S_j,S_k\}$ from $\aA$,
       a transversal design $\tT_B$ on the groups $S_i$, $S_j$, $S_k$.
\end{itemize}
If all these systems are resolvable, then the union
\begin{equation}\label{eq:_1}
 \bigcup_{i=0}^{n-1} \bB_{i} \cup \bigcup_{B\in \aA} \tT_B
\end{equation}
is a resolvable \STS$(mn)$.
\end{lemma}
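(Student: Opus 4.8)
The plan is to construct an explicit resolution of the union~\eqref{eq:_1} out of the given resolutions of the three kinds of ``component'' systems.

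First I would record that~\eqref{eq:_1} is indeed an \STS$(mn)$; this is checked directly, by the same argument as the ``if'' direction of Proposition~\ref{p:standard}. A pair of points lying in a single group $S_i$ is covered exactly once, by a block of $\bB_i$, and by nothing else, since blocks of $\bB_j$ with $j\neq i$ lie on a disjoint support and every block of a transversal design meets a group in a single point; a pair of points in distinct groups $S_i,S_j$ determines the unique block $B=\{S_i,S_j,S_k\}$ of $\aA$ through $S_i$ and $S_j$, hence a unique block of $\tT_B$ through the two points, while no other listed block covers the pair.

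Next, fix a resolution $\aA=P_1\cup\ldots\cup P_{(n-1)/2}$, each $P_q$ being a partition of $\{S_0,\ldots,S_{n-1}\}$ into triples; fix, for each $i$, a resolution $\bB_i=R^i_1\cup\ldots\cup R^i_{(m-1)/2}$; and fix, for each $B\in\aA$, a resolution $\tT_B=Q^B_1\cup\ldots\cup Q^B_m$. For every $s\in\{1,\ldots,(m-1)/2\}$ set $R_s:=\bigcup_{i=0}^{n-1}R^i_s$; since the $S_i$ partition $S$ and each $R^i_s$ partitions $S_i$ into blocks of~\eqref{eq:_1}, the set $R_s$ is a parallel class of~\eqref{eq:_1}. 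For every $q\in\{1,\ldots,(n-1)/2\}$ and every $t\in\{1,\ldots,m\}$ set $Q_{q,t}:=\bigcup_{B\in P_q}Q^B_t$; since $P_q$ partitions the groups, the supports of the designs $\tT_B$, $B\in P_q$, partition $S$, and each $Q^B_t$ partitions the support of $\tT_B$ into blocks of~\eqref{eq:_1}, so $Q_{q,t}$ is a parallel class of~\eqref{eq:_1} as well. Finally I would argue that the family $\{R_s\}_s\cup\{Q_{q,t}\}_{q,t}$ is a partition of the block set of~\eqref{eq:_1}, which by definition makes it a resolution: the blocks of~\eqref{eq:_1} split into the within-group blocks $\bigcup_i\bB_i$ and the transversal blocks $\bigcup_{B\in\aA}\tT_B$, and these two collections are disjoint (a within-group triple and a transversal triple can never coincide); a within-group block lies in a unique $\bB_i$ and then, by the resolution of $\bB_i$, in a unique $R^i_s$, hence in a unique $R_s$; a transversal block lies in a unique $\tT_B$ (distinct $B$ correspond to disjoint triples of groups), which lies in a unique $P_q$ (the $P_q$ partition $\aA$), and then in a unique $Q^B_t$, hence in a unique $Q_{q,t}$. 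Thus every block is placed in exactly one member of the family, and this family consists of $(m-1)/2+m\cdot(n-1)/2=(mn-1)/2$ parallel classes --- precisely the number of parallel classes in a resolution of an \STS$(mn)$.

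I do not expect a genuine obstacle here; this is the construction already implicit in~\cite{LuShi}. The only step requiring care is the last one, the bookkeeping that the two ``synchronization'' recipes --- taking the same index $t$ across all transversal designs $\tT_B$ with $B$ in one fixed parallel class $P_q$ of $\aA$, and the same index $s$ across all the $\bB_i$ --- together exhaust the block set of~\eqref{eq:_1} without overlap, so that what is produced is a bona fide resolution and not merely a list of parallel classes.
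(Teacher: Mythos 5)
Your proposal is correct and follows essentially the same route as the paper's own proof: check the STS property pairwise, then merge the resolutions of the $\bB_i$ index-by-index and, for each parallel class of $\aA$, merge the resolutions of the corresponding $\tT_B$ index-by-index to obtain the parallel classes of~\eqref{eq:_1}. The extra bookkeeping you add (disjointness of the two block families and the count $(m-1)/2+m(n-1)/2=(mn-1)/2$) is a harmless elaboration of what the paper leaves implicit.
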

\begin{proof}
 The proof is straightforward.
 At first, check the STS property. If two points belong to the same $S_i$,
 then they lie in a unique block from the STS $\bB_i$.
 If they are from different $S_i$, $S_j$,
 then there is a unique $k$ such that $\{S_i,S_j,S_k\} \in \aA$ and,
 by the definition of a transversal design, a unique block
 from $\tT_{\{S_i,S_j,S_k\}}$ that contains these two points.
 So, \eqref{eq:_1} in an STS by the definition.

 Next,
 if $\{\pP_i^j\}_{j=1}^{(m-1)/2}$ is a resolution of $\bB_i$, $i=0,\ldots,n-1$,
 then
 $\{\bigcup_{i=0}^{n-1} \pP_i^j\}_{j=1}^{(m-1)/2}$
 is a partition of the first part of \eqref{eq:_1} into parallel classes.

 If we have a parallel class $\pP$ of $\aA$, and
 for every $B$ in $\pP$, $\{\pP_B^j\}_{j=1}^{m}$ is a resolution of $\tT_B$,
 then $\{\bigcup_{B\in \pP} \pP_B^j\}_{j=1}^{m}$ is a partition of
 $$ \bigcup_{B\in \pP} \tT_B $$
 into parallel classes.
 Unifying over all parallel classes $ \pP $ of a resolution of $\aA$,
 we obtain a partition of the second part of \eqref{eq:_1} into parallel classes.
 So, we have a partition of  \eqref{eq:_1} into parallel classes, and it is a resolvable STS by the definition.
\end{proof}

\section{Resolvable STS of the minimum $3$-rank}\label{s:min}

In this section,
we are interested in the number
of resolvable STS of the minimum $3$-rank,
for a given order $v$.
According to Corollary~\ref{c:min},
there are two cases for the order $v=3^tT'$ of an STS,
$T'\equiv 1 \bmod 6$
and
$T'\equiv 5 \bmod 6$,
which will be considered separately.
Note that in the second case,
the value of $t$ cannot be smaller than $1$;
so, in that case $v=3^{t-1}T$, where $T\equiv 15 \bmod 18$.
We will start with this case,
as it is solved by simpler arguments.

\subsection{$v=3^kT$, where $T\equiv 15 \bmod 18$}\label{s:15}
\begin{theorem}\label{th:1}
 Assume that $v=3^k T$,
 where $T\equiv 15 \bmod 18$.
 The number $N(v,k)$ of isomorphism classes of resolvable
 Steiner triple systems on $v$ points
 with $3$-rank exactly $v-k-1$ satisfies
 \begin{equation}\label{eq:b1}
   N(v,k)
 \geq
 \frac
 {{\widetilde{{N_1}}(T)}^M\cdot{\widetilde{{N_3}}(T)}^{M(M-1)/6}}
 {T!^M\cdot|{\mathrm{AGL}(k,3)}|}
 \end{equation}
where $M=3^{k}$,
$\widetilde{N_1}(T)$ is the number of resolvable \STS$(T)$,
$\widetilde{N_3}(T)$ is the number of resolvable \TD$(3,T)$
(Latin squares of order $T$ having an orthogonal mate),
and $\mathrm{AGL}(k,3)$ is the automorphism group of the affine geometry $\AG(k)$,
$|{\mathrm{AGL}(k,3)}| = 3^k\cdot\prod_{i=0}^{k-1}(3^k-3^i)$.
\end{theorem}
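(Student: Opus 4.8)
The plan is to produce a large family of resolvable \STS$(v)$ of rank $v-k-1$ by the construction behind Proposition~\ref{p:standard} and Lemma~\ref{l:resol}, and then to control the sizes of the isomorphism classes by an orbit count. Fix the point set $S=\{0,\dots,v-1\}$, the groups $S_0,\dots,S_{M-1}$ with $M=3^{k}$, and the affine geometry $\AG(k)$ on the groups, as in Proposition~\ref{p:standard}. Let $\mathcal L$ be the set of all block sets of the form \eqref{eq:3} in which each $(S_i,\bB_i)$ is a resolvable \STS$(T)$ and each $\tT_B$, $B\in\AG(k)$, is a resolvable \TD$(3,T)$. Since $\AG(k)$ carries the standard resolution, Lemma~\ref{l:resol} shows every member of $\mathcal L$ is a resolvable \STS$(v)$; by Proposition~\ref{p:standard} it is orthogonal to $G(v,k)$, hence has $3$-rank at most $v-k-1$ by Proposition~\ref{p:any}; and because $T\equiv15\bmod18$ gives $v=3^{k+1}(T/3)$ with $T/3\equiv5\bmod6$, Corollary~\ref{c:min} says $v-k-1$ is the least possible rank, so the rank of every member of $\mathcal L$ is exactly $v-k-1$. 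Since the decomposition \eqref{eq:3} of a block set is unique (the $\bB_i$ are the blocks inside $S_i$, and the $\tT_B$ are the blocks meeting three groups), counting independent choices gives $|\mathcal L|=\widetilde{N_1}(T)^{M}\cdot\widetilde{N_3}(T)^{M(M-1)/6}$, as $\AG(k)$ has $\binom{3^{k}}{2}/3=M(M-1)/6$ blocks.

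The heart of the argument is that an isomorphism between two members of $\mathcal L$ must respect the group partition and the affine structure on the groups. Note that $\dim G(v,k)=k+1$; hence for $\bB\in\mathcal L$ the GF$(3)$-span $\cC(\bB)$ of the characteristic vectors of its blocks has dimension $v-k-1$, and its dual, which contains $G(v,k)$, must equal $G(v,k)$ by dimension count. A bijection $\phi$ of $S$ with $\phi(\bB)=\bB'$ for $\bB,\bB'\in\mathcal L$ acts on $\mathrm{GF}(3)^{S}$ by permuting coordinates, an orthogonal map; it carries $\cC(\bB)$ to $\cC(\bB')$, hence $G(v,k)=\cC(\bB)^{\perp}$ to $G(v,k)=\cC(\bB')^{\perp}$. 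The generators of $G(v,k)$ (the all-one vector and the $k$ rows) are constant on each $S_i$ and take distinct value-patterns on distinct $S_i$, so the groups $S_0,\dots,S_{M-1}$ are precisely the classes of the equivalence ``$g_a=g_b$ for all $g\in G(v,k)$''; since $\phi$ preserves $G(v,k)$ it preserves this equivalence, hence permutes the groups. Finally, a block of a member of $\mathcal L$ meets either one group or three, and $\phi$ preserves the number of groups a block meets, so the blocks of $\bB$ meeting three groups go to blocks of $\bB'$ meeting three groups; as the latter exist for a triple of groups exactly when that triple lies in $\AG(k)$, the permutation of $\{0,\dots,M-1\}$ induced by $\phi$ preserves $\AG(k)$, i.e.\ lies in $\mathrm{AGL}(k,3)=\mathrm{Aut}(\AG(k))$.

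Let $H$ be the group of all bijections of $S$ that permute $S_0,\dots,S_{M-1}$ and induce an element of $\mathrm{AGL}(k,3)$, so $|H|=T!^{M}\cdot|\mathrm{AGL}(k,3)|$. The previous paragraph shows that any bijection mapping a member of $\mathcal L$ to a member of $\mathcal L$ lies in $H$, and conversely every element of $H$ clearly carries a member of $\mathcal L$ to a member of $\mathcal L$ (it merely re-indexes the groups by an automorphism of $\AG(k)$ and maps each component to an isomorphic, hence again resolvable, one). Hence, for $\bB\in\mathcal L$, the set of members of $\mathcal L$ isomorphic to $\bB$ is exactly the $H$-orbit of $\bB$, of size $|H|/|H\cap\mathrm{Aut}(\bB)|\le|H|$. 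Partitioning $\mathcal L$ into these orbits and recalling that each member is a resolvable \STS$(v)$ of rank exactly $v-k-1$, we obtain
$N(v,k)\ \ge\ |\mathcal L|/|H|
= \widetilde{N_1}(T)^{M}\cdot\widetilde{N_3}(T)^{M(M-1)/6}\big/\bigl(T!^{M}\cdot|\mathrm{AGL}(k,3)|\bigr)$,
which is \eqref{eq:b1}.

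I expect the coding-theoretic step — that the minimum-rank hypothesis forces $\cC(\bB)^{\perp}=G(v,k)$, making the group partition an isomorphism invariant — to be the only delicate point; everything else is the explicit construction of Lemma~\ref{l:resol} together with a routine orbit count, and it is exactly this invariance that lets the denominator be only $T!^{M}|\mathrm{AGL}(k,3)|$ rather than $v!$.
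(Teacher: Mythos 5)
Your proof is correct and follows essentially the same route as the paper: construct the family via Proposition~\ref{p:standard} and Lemma~\ref{l:resol}, note that rank exactly $v-k-1$ forces the dual space to be $G(v,k)$, and divide by the number of coordinate permutations preserving $G(v,k)$. The only difference is cosmetic: where the paper cites \cite[Theorem~3.5]{JunTon:18+} for the count $T!^{M}\cdot|\mathrm{AGL}(k,3)|$, you derive the structure of these permutations (group-preserving, inducing an automorphism of $\AG(k)$) directly, which is a sound self-contained substitute.
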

\begin{proof}
We note that $T \equiv 3 \bmod 6$;
so, there exists a resolvable STS$(T)$ \cite{RChWil:kirkman}.
As $T/3 \equiv 5 \bmod {6}$, there is no STS$(T/3)$.
So, $v-k-1$ is the minimum possible rank. We consider the STS's orthogonal to $G(v,k)$.
By Proposition~\ref{p:standard},
there are
exactly ${{{{N_1}}(T)}^M\cdot{{{N_3}}(T)}^{M(M-1)/6}} $ such STS,
where $N_1(T)$ is the number of \STS$(T)$,
$N_3(T)$ is the number of Latin squares of order $T$.
As follows from Lemma~\ref{l:resol},
at least ${{\widetilde{{N_1}}(T)}^M\cdot{\widetilde{{N_3}}(T)}^{M(M-1)/6}}$
of them are resolvable.

It remains to show that at most $ {T!^M\cdot|AGL(k,3)|} $ of these systems can belong to the same
isomorphism class. Indeed, because the $3$-rank of the considered systems is exactly $v-k-1$,
$G(v,k)$ is the dual space of each of them, which is uniquely defined by a system
(this is a crucial observation). Therefore, an isomorphism between two systems
is necessarily an automorphism of $G(v,k)$.
The number of such automorphisms is $ {T!^M\cdot|\mathrm{AGL}(k,3)|} $, see
\cite[Theorem~3.5]{JunTon:18+}.
\end{proof}

\subsection{$v=3^kT$, where $T\equiv 1 \bmod 6$}

In this case,
we cannot use the arguments of Theorem~\ref{th:1}
directly, because $T$ is not divisible by $3$ and hence
there are no resolvable STS$(T)$, $\widetilde N_1(T)=0$.
To solve this,
we will introduce a modified version of Proposition~\ref{p:standard},
where the union~\eqref{eq:3} is rearranged to fit our needs.
We fix $t$ from $0 $ to $k$ and split the set $\{0, \ldots ,3^k-1\}$ into the $3^{k-t}$ groups
$$L_0:=\{0, \ldots ,3^t-1\},\ \ldots,\ L_{3^{k-t}-1} := \{(3^{k-t}-1)3^t, \ldots ,3^k-1\}$$
of size $3^t$.
Denote $$S'_j:=\bigcup_{i\in L_j}S_i, \qquad j=0, \ldots , 3^{k-t}-1.$$
Now, it is easy to understand that the blocks of \AG$(k)$ restricted by $S'_j$
form a sub-STS, actually \AG$(t)$; we denote it by $\AG_j(t)$, $j=0, \ldots , 3^{k-t}-1$.
The set of blocks of \AG$(k)$ that do not belong to any $\AG_j(t)$ is denoted by
$\AG^{(t)}(k)$
(essentially, it is a TD$(3^{k-t},3^{t})$). With this notation, \eqref{eq:3} turns to
\begin{eqnarray}
  \bigcup_{j=0}^{3^{k-t}-1}\bigcup_{i\in L_j} \bB_{i}
 \ \cup \
  \bigcup_{j=0}^{3^{k-t}-1}\bigcup_{B\in \AG_j(t)} \tT_B
  \ \cup \
   \bigcup_{B\in \AG^{(t)}(k)} \tT_B
   \\
   =
     \bigcup_{j=0}^{3^{k-t}-1}
\cC_j
     \ \cup \
   \bigcup_{B\in \AG^{(t)}(k)} \tT_B
   \\ \label{eq:Cj}
   \mbox{where }
   \cC_j := \bigcup_{i\in L_j} \bB_{i}
 \cup
 \bigcup_{B\in \AG_j(t)} \tT_B.
\end{eqnarray}
We now see that $(S'_j, \cC_j)$ is an STS, and it is orthogonal to $G(3^t T,t)$.
Moreover, by Proposition~\ref{p:standard}, any  $({S'_j}, \cC_j)$ orthogonal to $G(3^t T,t)$
can be represented as \eqref{eq:Cj}.
So, we have derived the following generalization of Proposition~\ref{p:standard}.

\begin{theorem}\label{th:standard+}
Let $v = 3^k \cdot T = 3^{k-t}\cdot 3^t\cdot T $.
Denote $S=\{0,\ldots,v-1\}$, $S_i=\{iT,\ldots,(i+1)T-1\}$, $i=0,\ldots,3^k-1$,
and {$S'_j=\{j3^tT,\ldots,(j+1)3^tT-1\}$,} $j=0,\ldots,3^{k-t}-1$.
 An STS $(S,\bB)$ is orthogonal to  $G(v,k)$ if and only if
\begin{equation}\label{eq:4}
 \bB=\bigcup_{j=0}^{3^{k-t}-1} \cC_{j} \cup \bigcup_{B\in \AG^{(t)}(k)} \tT_B,
\end{equation}
where $(S'_j,\cC_j)$ is an STS  orthogonal to {$G(3^t T,t)$},
$j=0,\ldots,3^{k-t}-1$,
and $(\{S_{i_1},S_{i_2},S_{i_3}\}, \tT_{\{i_1,i_2,i_3\}})$ is a TD,
$\{i_1,i_2,i_3\} \in \AG^{(t)}(k)$.
\end{theorem}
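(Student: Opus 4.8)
The plan is to deduce the theorem from Proposition~\ref{p:standard} applied at the two scales $k$ and $t$, the only genuinely new content being the combinatorial bookkeeping that rearranges the union \eqref{eq:3}. I would organize this into the two implications.

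For the ``only if'' direction, I start from an STS $(S,\bB)$ orthogonal to $G(v,k)$ and use Proposition~\ref{p:standard} with parameters $k$ and $T$ to write $\bB=\bigcup_{i=0}^{3^k-1}\bB_i\cup\bigcup_{B\in\AG(k)}\tT_B$ with each $(S_i,\bB_i)$ an STS and each $\tT_B$ a TD. The point to verify is how the blocks of $\AG(k)$ split relative to the groups $L_0,\dots,L_{3^{k-t}-1}$: for a block $\{a,b,c\}$ of $\AG(k)$ the first $k-t$ ternary digits of $t(a),t(b),t(c)$ (which are exactly the indices $j$ of the groups $L_j$ containing these points) also sum to zero over $\mathrm{GF}(3)$, and since $2x=-x$ in $\mathrm{GF}(3)$ this forces these three indices either to all coincide or to be pairwise distinct; in the first case $\{a,b,c\}$ lies in the restriction $\AG_j(t)$, in the second in $\AG^{(t)}(k)$, and no block meets exactly two of the $L_j$. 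Hence $\AG(k)$ is the disjoint union of the $\AG_j(t)$ and $\AG^{(t)}(k)$, and regrouping the union turns $\bB$ into $\bigcup_j\cC_j\cup\bigcup_{B\in\AG^{(t)}(k)}\tT_B$ with $\cC_j$ as in \eqref{eq:Cj}. Finally, under the order isomorphism $S'_j\to\{0,\dots,3^tT-1\}$ the sub-blocks $S_i$ ($i\in L_j$) become the blocks $\{0,\dots,T-1\},\dots$ of Proposition~\ref{p:standard} at scale $t$, and $\AG_j(t)$ becomes $\AG(t)$ (fixing the leading $k-t$ digits and letting the remaining $t$ range freely reproduces the defining relation of $\AG(t)$); so the decomposition $\cC_j=\bigcup_{i\in L_j}\bB_i\cup\bigcup_{B\in\AG_j(t)}\tT_B$ is exactly of the form described in Proposition~\ref{p:standard} at scale $t$, and the ``if'' part of that proposition gives that $(S'_j,\cC_j)$ is an STS orthogonal to $G(3^tT,t)$.

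For the ``if'' direction I run the same computation in reverse: given $\bB=\bigcup_j\cC_j\cup\bigcup_{B\in\AG^{(t)}(k)}\tT_B$ with each $(S'_j,\cC_j)$ an STS orthogonal to $G(3^tT,t)$, I apply the ``only if'' part of Proposition~\ref{p:standard} at scale $t$ (after the identification above) to expand each $\cC_j$ as $\bigcup_{i\in L_j}\bB_i\cup\bigcup_{B\in\AG_j(t)}\tT_B$ with STS parts $\bB_i$ and TD parts $\tT_B$; substituting and using again $\AG(k)=\bigsqcup_j\AG_j(t)\sqcup\AG^{(t)}(k)$ puts $\bB$ in the form \eqref{eq:3}, and Proposition~\ref{p:standard} at scale $k$ then yields that $(S,\bB)$ is an STS orthogonal to $G(v,k)$.

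The set-theoretic regrouping of the unions and the bookkeeping that the pieces $\bB_i$, $\tT_B$ retain their meaning are routine. The one step I expect to require care — and which I regard as the crux — is the identification, on each $S'_j$, of the restricted standard objects with the standard objects at scale $t$: one must make sure both that the restriction of $\AG(k)$ to a group $L_j$ is a genuine copy of $\AG(t)$ and that ``orthogonality to $G(v,k)$'' is consistent with ``orthogonality to $G(3^tT,t)$'' on $S'_j$ (the latter because the columns of the defining matrix of $G(v,k)$ indexed by $S'_j$ are those of the defining matrix of $G(3^tT,t)$ with $k-t$ constant rows prepended, which enlarge the span only by the all-one vector), so that Proposition~\ref{p:standard} may legitimately be invoked at the smaller scale.
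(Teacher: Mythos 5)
Your proposal is correct and takes essentially the same route as the paper: the paper also obtains Theorem~\ref{th:standard+} by regrouping the decomposition of Proposition~\ref{p:standard} at scale $k$ into the pieces $\cC_j$ and $\AG^{(t)}(k)$, identifying each restriction $\AG_j(t)$ with a copy of $\AG(t)$, and invoking Proposition~\ref{p:standard} again at scale $t$ to characterize the $\cC_j$ as exactly the STS on $S'_j$ orthogonal to $G(3^tT,t)$. Your write-up merely makes explicit the digit-sum bookkeeping and the orthogonality identification that the paper treats as immediate.
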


\begin{theorem}\label{th:2}
 Assume that $v=3^k T$,
 where $T\equiv 1 \bmod 6$.
 The number $N(v,k)$ of isomorphism classes of resolvable
 Steiner triple systems on $v$ points
 with $3$-rank exactly $v-k-1$ satisfies
 \begin{equation}\label{eq:b2}
   N(v,k)
 \geq
 \frac
 {{\widehat{{N_1}}(3T)}^{M/3}\cdot{\widetilde{{N_3}}(T)}^{M(M-3)/6}}
 {T!^M\cdot|AGL(k,3)|},
 \end{equation}
where $M=3^{k}$;
$\widehat{N_1}({3}T)$ is the number of resolvable \STS$({3}T)$ orthogonal to $G(3T,1)$;
$\widetilde{N_3}(T)$ is the number of Latin squares of order $T$ having an orthogonal mate.
\end{theorem}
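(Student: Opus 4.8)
The plan is to follow the proof of Theorem~\ref{th:1}, replacing Proposition~\ref{p:standard} by Theorem~\ref{th:standard+} taken at $t=1$. Since $T\equiv 1\bmod 6$ is coprime to $3$, the number $3^k$ is the largest power of $3$ dividing $v=3^kT$, so by Corollary~\ref{c:min} the value $v-k-1$ is the minimum possible $3$-rank of an STS$(v)$; hence by Proposition~\ref{p:any} an STS$(v)$ has $3$-rank exactly $v-k-1$ if and only if it is isomorphic to one orthogonal to $G(v,k)$, and for every such system $G(v,k)$ equals the dual of its $\mathrm{GF}(3)$-code (a standard observation, cf.\ the proof of Theorem~\ref{th:1}) and hence is an isomorphism invariant. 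By Theorem~\ref{th:standard+} with $t=1$, each STS orthogonal to $G(v,k)$ is, once the partitions into the groups $S_i$ and $S'_j$ are fixed, \emph{uniquely} of the form $\bigcup_{j=0}^{M/3-1}\cC_j\cup\bigcup_{B\in\AG^{(1)}(k)}\tT_B$, where $M=3^k$, each $(S'_j,\cC_j)$ is an STS$(3T)$ orthogonal to $G(3T,1)$, and each $\tT_B$ is a TD$(3,T)$; here there are $M/3=3^{k-1}$ components $\cC_j$, and $|\AG^{(1)}(k)|=\binom{M}{2}/3-M/3=M(M-3)/6$, so there are that many designs $\tT_B$.

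Next I would take each $\cC_j$ to be one of the $\widehat{N_1}(3T)$ resolvable STS$(3T)$ orthogonal to $G(3T,1)$ and each $\tT_B$ to be one of the $\widetilde{N_3}(T)$ resolvable TD$(3,T)$. By the uniqueness just noted, this yields ${\widehat{N_1}(3T)}^{M/3}\cdot{\widetilde{N_3}(T)}^{M(M-3)/6}$ pairwise distinct labelled STS$(v)$ orthogonal to $G(v,k)$; each of these, being orthogonal to $G(v,k)$, has $3$-rank at most $v-k-1$, hence exactly $v-k-1$. To show that all of them are resolvable I would invoke Lemma~\ref{l:resol} with the $3^{k-1}$ groups $S'_j$ of size $3T$: the outer STS is $\AG(k-1)$, which is resolvable; each $\cC_j$ is resolvable by the choice made; and for a block $\ell=\{S'_{j_1},S'_{j_2},S'_{j_3}\}$ of $\AG(k-1)$ the transversal design demanded by Lemma~\ref{l:resol} is $\tT_\ell:=\bigcup\{\tT_B: B\in\AG^{(1)}(k),\ B\ \text{lies over}\ \ell\}$, a TD$(3,3T)$ on $S'_{j_1}\cup S'_{j_2}\cup S'_{j_3}$. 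It remains to see that $\tT_\ell$ is resolvable. Inside $S'_{j_m}$ the three subgroups $S_i$, $i\in L_{j_m}$, are naturally indexed by the last ternary digit $c\in\{0,1,2\}$, and the blocks of $\AG^{(1)}(k)$ lying over $\ell$ are precisely those $\tT_B$ for which $B$ corresponds to a triple $(c_1,c_2,c_3)$ with $c_1+c_2+c_3\equiv 0\pmod 3$; the nine such triples form a resolvable TD$(3,3)$ on the subgroups, and $\tT_\ell$ is obtained from it by replacing each block with a resolvable TD$(3,T)$. The parallel-class gluing from the proof of Lemma~\ref{l:resol}, read for transversal designs rather than Steiner systems, then shows that such a blow-up of a resolvable TD$(3,3)$ by resolvable TD$(3,T)$'s is a resolvable TD$(3,3T)$; hence Lemma~\ref{l:resol} applies and every constructed system is resolvable.

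Finally, as in the last paragraph of the proof of Theorem~\ref{th:1}, at most $|\mathrm{Aut}(G(v,k))|$ of these labelled systems belong to any one isomorphism class: an isomorphism between two of them carries the code of one onto the code of the other, hence carries $G(v,k)$, the common dual code, onto itself, so it is an automorphism of $G(v,k)$; conversely $\mathrm{Aut}(G(v,k))$ permutes the STS orthogonal to $G(v,k)$, so the members of our family isomorphic to a fixed one form a single $\mathrm{Aut}(G(v,k))$-orbit. By \cite[Theorem~3.5]{JunTon:18+}, $|\mathrm{Aut}(G(v,k))|=T!^M\cdot|AGL(k,3)|$, and dividing the number of constructed systems by this quantity yields \eqref{eq:b2}.

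The only point that is not a routine adaptation of the proof of Theorem~\ref{th:1} is the resolvability of the transversal designs $\tT_\ell$: one has to recognize that the restriction of $\AG^{(1)}(k)$ over a line of $\AG(k-1)$ is a resolvable TD$(3,3)$ blown up by TD$(3,T)$'s, and to establish the transversal-design analogue of Lemma~\ref{l:resol} (proved in exactly the same way, or quotable from the literature on resolvable group divisible designs). The remaining ingredients -- the uniqueness of the decomposition in Theorem~\ref{th:standard+}, the counting of components, and the isomorphism bound -- are immediate analogues of what is done in Theorem~\ref{th:1}.
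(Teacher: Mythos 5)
Your proposal is correct, and its skeleton coincides with the paper's own proof: both apply Theorem~\ref{th:standard+} with $t=1$, count $|\AG^{(1)}(k)|=M(M-3)/6$, build ${\widehat{N_1}(3T)}^{M/3}\cdot{\widetilde{N_3}(T)}^{M(M-3)/6}$ systems orthogonal to $G(v,k)$ (hence of the minimum possible rank $v-k-1$, by Corollary~\ref{c:min}), and bound the size of an isomorphism class by $T!^M\cdot|\mathrm{AGL}(k,3)|$ exactly as in Theorem~\ref{th:1}. The one place where you genuinely deviate is the resolvability step. The paper does not regroup: it observes that $\AG^{(1)}(k)$ is itself resolvable, being $\AG(k)$ with one parallel class of its standard resolution deleted, and then repeats the gluing argument of Lemma~\ref{l:resol} with $\AG^{(1)}(k)$ in place of the outer STS (unions of parallel classes of the $\cC_j$ cover the first part of the union, and each of the $(M-3)/2$ parallel classes of $\AG^{(1)}(k)$ yields $T$ parallel classes from the resolutions of the $\tT_B$). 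You instead coarsen to the $3^{k-1}$ groups $S'_j$, apply Lemma~\ref{l:resol} literally with $\AG(k-1)$ as the outer STS, and must then show that the induced $\TD(3,3T)$ over each line of $\AG(k-1)$ is resolvable, which you do by recognizing it as a blow-up of the resolvable $\TD(3,3)$ given by $c_1+c_2+c_3\equiv 0 \pmod 3$ by resolvable $\TD(3,T)$'s and invoking a transversal-design analogue of Lemma~\ref{l:resol}. That extra blow-up lemma is true and is proved exactly like Lemma~\ref{l:resol}; you state it rather than prove it, which is acceptable, but the paper's route avoids it altogether and is correspondingly shorter. The remaining ingredients -- uniqueness of the decomposition, the rank argument, and the automorphism bound from \cite[Theorem~3.5]{JunTon:18+} -- match the paper's proof.
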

\begin{proof}
We first observe that
$$  |\AG^{(1)}(k)| = |\AG(k)|-\frac M3 = \frac{M(M-1)}6- \frac M3 = \frac{M(M-3)}6. $$
So, we can construct
$ {{\widehat{{N_1}}(3T)}^{M/3}\cdot{\widetilde{{N_3}}(T)}^{M(M-3)/6}} $
different systems in the form \eqref{eq:4},
where $t=1$ and all $\cC_j$ and $\tT_B$
are resolvable and $\cC_j$ is orthogonal to $G(3T,1)$.
By Theorem~\ref{th:standard+},
the resulting systems have the rank at most $v-k-1$,
which is the minimum possible value.
Similarly to Lemma~\ref{l:resol},
we find that the resulting systems are resolvable
(note that $\AG^{(1)}(k)$ is resolvable as it is obtained from $\AG(k)$
by deleting the blocks of a standard parallel class).

The proof that representatives of one isomorphism class occur not more than ${T!^M\cdot|AGL(k,3)|}$
times is similar to Theorem~\ref{th:1}.
\end{proof}

To claim that the theorem above gives a nonzero bound,
we need to ensure that $\widehat{{N_1}}(3T)>0$.
For some values of $T$, this fact is guaranteed by one of
Ray-Chaudhuri--Wilson constructions \cite{RChWil:kirkman},
as reflected in the following proposition.
Proving $\widehat{{N_1}}(3T)>0$ for the other values of $T$ remains an open problem.
\begin{proposition}\label{p:RChWil-constr}
For every prime power $T$ such that $T\equiv 1\bmod 6$, we have
$$\widehat{{N_1}}(3T) \ge \frac{6\cdot T!\cdot T!\cdot T!}{|\mathrm{GL}(\lfloor\log_2 (3T+1) \rfloor, 2)|}.$$
\end{proposition}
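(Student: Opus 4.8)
The plan is to exhibit a Ray--Chaudhuri--Wilson Kirkman system on $3T$ points that is orthogonal to $G(3T,1)$, count a lower bound on how many labelled copies this construction produces, and then divide by the maximal size of an isomorphism class among such labelled systems. First I would recall one of the standard constructions of a resolvable STS$(3T)$ from \cite{RChWil:kirkman} valid when $T\equiv 1\bmod 6$ is a prime power: start from the affine geometry $\mathrm{AG}(\ell,2)$-type construction (more precisely the Kirkman triple system built on $\mathrm{GF}(T)\times\{1,2,3\}$ by adjoining suitable difference families), noting that $\lfloor\log_2(3T+1)\rfloor$ is the relevant $2$-rank-type parameter $\ell$ appearing in the bound. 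The key point to verify is that the resulting STS$(3T)$ has $3$-rank at most $3T-2$, equivalently (by Proposition~\ref{p:any} with $v=3T$, $k=1$) that it is isomorphic to an STS orthogonal to $G(3T,1)$; since the three ``layers'' $\mathrm{GF}(T)\times\{i\}$ of the RChW construction play the role of the point classes $S_0,S_1,S_2$ in Proposition~\ref{p:standard} and the cross-class blocks form a TD$(3,T)$, this orthogonality is built into the construction.

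Next I would count labelled copies. Fixing the three classes $S_0,S_1,S_2$ as in Proposition~\ref{p:standard}, a construction of the above type is determined by a bijection from each class onto $\mathrm{GF}(T)$, i.e.\ by an element of $(T!)^3$; different choices give genuinely different labelled STS orthogonal to $G(3T,1)$ unless they are related by a symmetry of the construction, and there is an additional factor $6$ coming from permuting the three classes, which accounts for the numerator $6\cdot T!\cdot T!\cdot T!$. The denominator $|\mathrm{GL}(\lfloor\log_2(3T+1)\rfloor,2)|$ is then an upper bound on the number of labelled copies (among those produced) lying in a single isomorphism class: as in the proofs of Theorems~\ref{th:1} and~\ref{th:2}, since the $3$-rank is exactly $3T-2$ the dual space $G(3T,1)$ is intrinsic to each system, so any isomorphism is an automorphism of $G(3T,1)$; within this automorphism group, the stabilizer relevant to our family is controlled by the linear-algebraic automorphisms of the underlying $\mathrm{GF}(2)$-structure of the RChW construction, whose size is bounded by $|\mathrm{GL}(\lfloor\log_2(3T+1)\rfloor,2)|$. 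Dividing the count of labelled copies by the maximal isomorphism-class size yields the stated lower bound on $\widehat{N_1}(3T)$.

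The main obstacle I anticipate is the bookkeeping in the denominator, namely pinning down exactly which automorphisms of $G(3T,1)$ (equivalently, of the ambient structure $\mathrm{GF}(T)\times\{1,2,3\}$ with its group partition) can carry one RChW-type system to another, and showing that they are all captured by the action of $\mathrm{GL}(\ell,2)$ with $\ell=\lfloor\log_2(3T+1)\rfloor$. A clean way to handle this is to observe that the relevant resolution, or the associated $2$-rank structure, distinguishes a subspace on which $\mathrm{GL}(\ell,2)$ acts, and that all remaining coordinate freedom has already been quotiented by the $(T!)^3$ in the numerator; one must be careful that the factor $6$ and the $\mathrm{GL}(\ell,2)$ factor do not double-count the same symmetries, which is why the bound is stated as an inequality ($\ge$) rather than an equality. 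A secondary point requiring care is verifying that the construction indeed applies for \emph{every} prime power $T\equiv 1\bmod 6$ (not merely primes), which follows from the fact that the RChW difference-family constructions work over any $\mathrm{GF}(T)$; once these two points are settled, the proposition follows by the division argument.
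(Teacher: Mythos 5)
There is a genuine gap, and it sits exactly where you anticipated trouble: the denominator. Your construction and numerator match the paper's proof (one Ray--Chaudhuri--Wilson system $\bB$ on $\mathrm{GF}(T)\times\{0,1,2\}$, orthogonal to $G(3T,1)$ and of rank exactly $3T-2$ since $3\nmid T$, relabelled by the $6\cdot T!\cdot T!\cdot T!$ permutations that preserve the dual space). But the quantity you must divide by is not ``the maximal size of an isomorphism class among the labelled systems produced'': all $6\cdot(T!)^3$ relabellings give systems isomorphic to $\bB$, so the produced copies lie in a \emph{single} isomorphism class whose size is $6\cdot(T!)^3/|\mathrm{Aut}(\bB)|$ --- far larger than $|\mathrm{GL}(\lfloor\log_2(3T+1)\rfloor,2)|$. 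Moreover, $\widehat{N_1}(3T)$ counts labelled systems orthogonal to $G(3T,1)$, not isomorphism classes, so ``divide by class size'' is the wrong counting scheme in the first place. The correct division is by the stabilizer: two dual-space-preserving permutations yield the same labelled system iff they differ by an element of $\mathrm{Aut}(\bB)$ (which lies inside the dual-preserving group because the rank is exactly $3T-2$, making $G(3T,1)$ intrinsic), so the number of distinct labelled resolvable systems obtained is $6\cdot(T!)^3/|\mathrm{Aut}(\bB)|$.

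The second, decisive missing ingredient is the justification that $|\mathrm{Aut}(\bB)|\le|\mathrm{GL}(\lfloor\log_2(3T+1)\rfloor,2)|$. This does not come from any ``$\mathrm{GF}(2)$-structure of the RChW construction'' or from the resolution distinguishing a subspace on which $\mathrm{GL}(\ell,2)$ acts; your proposed route to capture the automorphisms this way would not go through. The paper instead invokes the general theorem of Solov'eva and Topalova \cite{SolTop2000pit} that \emph{every} Steiner triple system of order $n$ has at most $|\mathrm{GL}(\lfloor\log_2(n+1)\rfloor,2)|$ automorphisms, applied with $n=3T$. With that citation the argument closes immediately, and your worry about double-counting between the factor $6$ and the $\mathrm{GL}$ factor disappears: the $6\cdot(T!)^3$ counts the dual-preserving relabellings, and the $\mathrm{GL}$ bound only caps the redundancy $|\mathrm{Aut}(\bB)|$ in that count.
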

\begin{proof}
If $T$ satisfies the hypothesis of the proposition,
there is a construction \cite[Theorem~5]{RChWil:kirkman} of resolvable STS
$(\mathrm{GF}(T)\times\{0,1,2\},\bB)$,
$\bB=\bB_0\cup\bB_1\cup\bB_2\cup\tT$,
where $(\mathrm{GF}(T)\times\{i\},\bB_i)$, $i=0,1,2$,
are STS$(T)$ and $\tT$ is a TD$(3,T)$.
It is straightforward that such STS$(3T)$ is orthogonal to $(1,\ldots,1)$ and
$(0,\ldots,0,1, \ldots, 1, 2,\ldots,2)$;
so, its rank is $3T-2$ (it cannot be smaller because $T$ is not divisible by $3$).
Applying $3!\cdot T!\cdot T!\cdot T!$ permutations of points that keep the dual space,
we obtain isomorphic STS, and only $|\mathrm{Aut}(\bB)|$
of permutation give the same result.
So, the number of different STS isomorphic to $\bB$ and dual to $G(3T,1)$ is
$6\cdot T!\cdot T!\cdot T!/|\mathrm{Aut}(\bB)|$.
As proved in \cite{SolTop2000pit}, a Steiner triple system of any order $n$
has no more than $|\mathrm{GL}(\lfloor \log_2 (n + 1)\rfloor,2)|$ automorphisms.
\end{proof}

\begin{example}
 Consider the case $T=7$, $k=2$.
 Proposition~\ref{p:RChWil-constr} gives
 $\widehat{{N_1}}(21)\ge \frac{6\cdot 7!\cdot 7!\cdot 7!}{15\cdot 14\cdot 12\cdot 8} = 38102400 $.
 Considering only the isomorphic TD$(3,7)$ that correspond to the linear $7\times 7$ latin square,
 namely, the isomorphs of
 $$ \tT=\{\{a,b,c\} \,|\,
 a\in\{0,\ldots,6\},\
 b\in\{7,\ldots,13\},\
 c\in\{14,\ldots,20\},\
 a+b+c\equiv 0\bmod 7 \}, $$
 we find $$\widetilde N_3(7)\ge \frac{3!\cdot 7!^3}{|\mathrm{Aut}(\tT)|} \ge \frac {768144384000}{1764}=435456000.$$
 (The exact number $\widehat{{N_1}}(7)$ can be computed from the list of all $7$
 nonequivalent pairs of orthogonal $7\times 7$
 latin squares, see \cite{MK:data}.)
 Substituting to \eqref{eq:b2}, we find the
 following lower bound on the number
 of non-isomorphic STS$(64)$ of $3$-rank $61$:
 $$N(63,2)\ge \frac{38102400^3 \cdot 435456000^{9}} {7!^9 \cdot 9 \cdot 8 \cdot 6} \ge 10^{64}.$$
\end{example}

\section{Resolvable STS of non-minimum $3$-rank}\label{s:nonmin}

According to Propositions~\ref{p:any} and~\ref{p:standard},
we have a representation of every STS$(v)$ whose $3$-rank is not larger than
$v-k-1$.
To evaluate the number of isomorphism classes,
we can use arguments similar to those in Theorem~\ref{th:1}.
However, those arguments work
only if we are sure that the orthogonal space $G(v,k)$
is uniquely determined as the dual space of each of the considered systems;
that is, if the $3$-rank is exactly $v-k-1$. So,
we either need to develop another arguments to estimate the number
of isomorphism classes, or to consider some subclass of the class of all systems
from Proposition~\ref{p:standard} such that every system from
the subclass has $3$-rank exactly $v-k-1$. We choose the second way.
The key fact, in our arguments, is the following lemma.

\begin{lemma}\label{l:key}
Assume that $v=3^k T$, where $k\ge 1$ and $T>3$.
 Under the hypothesis and notation of Proposition~\ref{p:standard},
 there is an STS $(S_0,\bB_0')$ such that the STS
 \begin{equation}\label{eq:3'}
 \bB'=  \bB_{0}' \cup \bigcup_{i=1}^{3^k-1} \bB_{i} \cup \bigcup_{B\in \AG(k)} \tT_B,
\end{equation}
which differs from $\bB $ by only a sub-STS on $S_0$,
has the dual space
$G(v,k)$ and, hence, the rank $v-k-1$.

\end{lemma}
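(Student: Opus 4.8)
The plan is to show that the rank can only drop below $v-k-1$ because of a ``defect'' localized to a single coordinate block $S_0$, and that this defect can always be repaired by replacing $\bB_0$ with a suitable $\bB_0'$. Concretely, by Proposition~\ref{p:standard} the system $\bB$ is orthogonal to $G(v,k)$, so its rank is at most $v-k-1$; the only thing that can go wrong is that the rank is strictly smaller, i.e. the dual code $\bB^\perp$ strictly contains $G(v,k)$. The first step is to understand what such an extra dual vector looks like. Take $u\in\bB^\perp\setminus G(v,k)$ and expand $u=(u^{(0)},u^{(1)},\ldots,u^{(3^k-1)})$ according to the blocks $S_0,\ldots,S_{3^k-1}$, each $u^{(i)}\in\mathrm{GF}(3)^T$. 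Using that $u$ is orthogonal to every block of $\bB_i$ and to every block of every $\tT_B$, and chasing the definition of a TD exactly as in the proof of Lemma~\ref{l:resol}, I expect to conclude that each $u^{(i)}$ must be \emph{constant} on $S_i$ up to an element of $\bB_i^\perp$, and moreover that the pattern of these constants across the $3^k$ groups is forced to lie in the span of $\AG(k)$'s rows together with the all-ones vector. In other words, modulo $G(v,k)$ every extra dual vector is ``carried'' by the internal dual codes $\bB_i^\perp/\langle(1,\ldots,1)\rangle$, and in fact — because the TD-conditions tie the groups together — a single nonzero internal contribution in one group propagates, forcing nonzero contributions in the others unless they are all of a very special constant form. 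The upshot I am aiming for is: $\dim\bB^\perp = (v-k-1) + d$, where $d$ measures how much the chosen sub-STS's $\bB_i$ fail to have the minimal possible $3$-rank $T-1$, and — crucially — this discrepancy is already visible (can be concentrated) in $S_0$ alone.

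Granting that structural description, the second step is the repair. Since $T>3$, there exists an STS$(T)$ of $3$-rank exactly $T-1$: when $T\equiv 1\bmod 6$ this is guaranteed by Corollary~\ref{c:min} (with $t=0$), and when $T\equiv 3\bmod 6$ one uses instead the standard recursive STS of rank $T-1$ supplied by Proposition~\ref{p:standard} applied to $T=3^1\cdot(T/3)$ (here $T/3\equiv 1$ or $5\bmod 6$, and in either case an STS on $T/3$ or $T$ points of the required rank exists because $T>3$ rules out the degenerate small orders). Let $\bB_0'$ be such a minimal-rank STS on $S_0$. I then claim that replacing $\bB_0$ by $\bB_0'$ in \eqref{eq:3} yields $\bB'$ with $(\bB')^\perp = G(v,k)$. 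The verification reuses the structural analysis of the first step: any $u\in(\bB')^\perp\setminus G(v,k)$ would force a nonzero internal contribution somewhere; if it were in $S_0$ it would contradict $\bB_0'^\perp=\langle(1,\ldots,1)\rangle$, and if it were in some $S_i$, $i\ge1$, the TD-conditions connecting $S_i$ to $S_0$ (such a block $B\in\AG(k)$ with $0\in B$ exists since $k\ge1$) would force a nonzero internal contribution back into $S_0$, again a contradiction. Hence $(\bB')^\perp=G(v,k)$, so the rank of $\bB'$ is exactly $v-\dim G(v,k)=v-k-1$.

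The main obstacle is the first step — pinning down precisely the structure of $\bB^\perp/G(v,k)$. The delicate point is the interaction between the internal dual vectors of the $\bB_i$ and the TD-blocks $\tT_B$: I need the fact that a transversal design $\TD(3,T)$, viewed as a code on $3T$ coordinates, has dual code exactly spanned by the three group-indicator vectors (equivalently, the all-ones vector together with the two differences), so that no ``new'' orthogonality relations can arise from the $\tT_B$ beyond those already in $G(v,k)$; this is essentially the content behind \cite[Theorem~3.5]{JunTon:18+} and should be citable or quickly re-derived. Once that is in hand, the bookkeeping — that an extra dual vector is equivalent to simultaneous extra dual vectors in all of the $\bB_i$ lying over a single $\AG(k)$-line, which can be ``pushed'' onto $S_0$ — is linear algebra over $\mathrm{GF}(3)$ of the same flavor as the proof of Proposition~\ref{p:standard}, and I expect it to go through without surprises.
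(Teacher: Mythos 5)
You have a genuine gap, and it sits exactly at the point where the paper's key idea is needed. Your repair step requires an $\STS(T)$ whose dual is spanned by the all-one vector, i.e.\ of $3$-rank exactly $T-1$. Such a system need not exist: for $T=9$ (allowed by the lemma, since $T>3$, and essential for Theorem~\ref{th:1'}, which covers $T\equiv 9\bmod 18$) the unique $\STS(9)$ is $\AG(2)$, of $3$-rank $6$, so there is no candidate $\bB_0'$ of rank $8$ at all. Your justification for $T\equiv 3\bmod 6$ is also backwards: Proposition~\ref{p:standard} produces systems orthogonal to $G(T,1)$, hence of rank \emph{at most} $T-2$, never of rank $T-1$; and for $T\equiv 3\bmod 6$, $T>9$, the existence of rank-$(T-1)$ systems is not something the paper provides. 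The paper's proof avoids any such existence question: it keeps the very same $\bB_0$ and replaces it by a permuted copy $\pi(\sigma(\bB_0))$. Proposition~\ref{p:any2} shows the dual of $\bB\setminus\bB_0$ is (a copy of) $G(v,k')$, so any unwanted dual vector of $\bB'$ restricts on $S_0$ to a \emph{non-constant} vector of $G(T,t)$; Proposition~\ref{p:perm} then lets one rotate the dual space of $\bB_0$ so that it meets $G(T,t)$ only in the all-one vector, killing all such vectors. This works for every $T>3$ regardless of the rank of $\bB_0$, and it has the side benefit (needed when the lemma feeds into Theorem~\ref{th:1'}) that $\bB_0'$ is isomorphic to $\bB_0$, hence resolvable whenever $\bB_0$ is — your arbitrary maximal-rank replacement would not guarantee that.

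A secondary inaccuracy: the ``fact'' you lean on, that the dual of a $\TD(3,T)$ is exactly the span of the three group indicators, is false in general (the TD built from the Cayley table of the elementary abelian group of order $9$ has a strictly larger dual). What is true, and is all you need for the propagation part of your argument, is weaker: a dual vector of a $\TD(3,T)$ that is constant on one group is constant on all three (fix a point of that group and let the block vary), and if a dual vector of $\bB'$ is constant on every $S_i$ then its vector of group-constants is dual to $\AG(k)$, hence lies in $G(3^k,k)$, forcing the vector into $G(v,k)$. With this correction your outline does prove the lemma in those cases where a rank-$(T-1)$ resolvable-compatible $\STS(T)$ exists (e.g.\ every $T\equiv 1\bmod 6$), but the cases it cannot reach — above all $T=9$ — are precisely the ones for which the paper's permutation argument (Propositions~\ref{p:any2} and~\ref{p:perm}) was devised.
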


Before proving Lemma~\ref{l:key}, we formulate its direct corollary,
the main result of this section.

\begin{theorem}\label{th:1'}
 Assume that $v=3^k T$,
 where $T\equiv 1,3 \bmod 6$ and $k\ge 1$.
 The number $N(v,k)$ of isomorphism classes of resolvable
 Steiner triple systems on $v$ points
 with $3$-rank exactly $v-k-1$ satisfies
 \begin{equation}\label{eq:b1'}
   N(v,k)
 \geq
 \frac
 {{\widetilde{{N_1}}(T)}^{M-1}\cdot{\widetilde{{N_3}}(T)}^{M(M-1)/6}}
 {T!^M\cdot|AGL(k,3)|}
 \end{equation}
where $M=3^{k}$,
$\widetilde{N_1}(T)$ is the number of resolvable \STS$(T)$,
$\widetilde{N_3}(T)$ is the number of resolvable \TD$(3,T)$
(Latin squares of order $T$ having an orthogonal mate).
\end{theorem}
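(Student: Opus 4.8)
The plan is to follow the scheme of Theorem~\ref{th:1}: first exhibit a large supply of resolvable \STS$(v)$ whose $3$-rank is \emph{provably} equal to $v-k-1$, and then bound the number of members of this supply lying in one isomorphism class. The single new ingredient compared with Theorem~\ref{th:1} is Lemma~\ref{l:key}, which is used to guarantee that the rank equals $v-k-1$: when $T\not\equiv15\bmod18$, not every system of the form described in Proposition~\ref{p:standard} has this property, and then the uniqueness of $G(v,k)$ as the dual space — the pivot of the isomorphism count in Theorem~\ref{th:1} — would fail.

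\emph{The family.} Keep the notation of Proposition~\ref{p:standard}: $M=3^k$, the groups $S_i=\{iT,\ldots,(i+1)T-1\}$, and the incidence structure \AG$(k)$ on the index set $\{0,\ldots,M-1\}$. Assume $T>3$, as required by Lemma~\ref{l:key}; the case $T=3$ is commented on at the end. For every choice of a resolvable \STS$(S_i,\bB_i)$, $i=1,\ldots,M-1$, and of a resolvable \TD$(3,T)$ $\tT_B$ for each block $B\in\AG(k)$, I would invoke Lemma~\ref{l:key} to obtain an \STS$(S_0,\bB_0')$ for which
$$\bB'=\bB_0'\cup\bigcup_{i=1}^{M-1}\bB_i\cup\bigcup_{B\in\AG(k)}\tT_B$$
has dual code exactly $G(v,k)$, hence $3$-rank exactly $v-k-1$. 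Distinct admissible choices give distinct systems $\bB'$: $\bB_i$ ($i\ge1$) is the set of blocks of $\bB'$ contained in $S_i$, and $\tT_B$, $B=\{i_1,i_2,i_3\}$, is the set of blocks of $\bB'$ meeting each of $S_{i_1},S_{i_2},S_{i_3}$. Since $|\AG(k)|=M(M-1)/6$, this yields at least $\widetilde{N_1}(T)^{M-1}\cdot\widetilde{N_3}(T)^{M(M-1)/6}$ systems, and each of them is resolvable by Lemma~\ref{l:resol} (applied with $n=M$, $m=T$ and ground STS $\AG(k)$), \emph{provided} the \STS $\bB_0'$ delivered by Lemma~\ref{l:key} is itself resolvable.

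\emph{Counting isomorphism classes.} Because $\dim G(v,k)=k+1$ (the all-one vector being independent of the $k$ lexicographic rows), a system orthogonal to $G(v,k)$ has $3$-rank $v-k-1$ exactly when its dual code equals $G(v,k)$; thus $G(v,k)$ is an isomorphism invariant of every member of the family. Hence any isomorphism between two members is a coordinate permutation stabilizing $G(v,k)$, i.e.\ an element of $\mathrm{Aut}(G(v,k))$, and there are $T!^M\cdot|\mathrm{AGL}(k,3)|$ of those by \cite[Theorem~3.5]{JunTon:18+}. Consequently no isomorphism class meets the family in more than $T!^M\cdot|\mathrm{AGL}(k,3)|$ systems, and dividing the two estimates yields \eqref{eq:b1'}. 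As in Theorem~\ref{th:1}, the bound is informative only when $\widetilde{N_1}(T)>0$, i.e.\ for $T\equiv3\bmod6$; for $T\equiv1\bmod6$ it is vacuous, which is why Theorem~\ref{th:2} handles that residue class separately.

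\emph{The main obstacle.} With Lemma~\ref{l:key} in hand the above is routine, so the real difficulty is the compatibility of that lemma with resolvability: as stated, Lemma~\ref{l:key} only produces \emph{some} \STS $\bB_0'$ achieving the rank $v-k-1$, not a resolvable one. I expect to overcome this either by checking that the construction of $\bB_0'$ inside the proof of Lemma~\ref{l:key} can be carried out within the class of resolvable \STS$(T)$ (nonempty since $T\equiv3\bmod6$), or — equivalently for the union — by running Lemma~\ref{l:key} from a resolvable $\bB_0$ and verifying that the surgery confined to $S_0$ preserves global resolvability. A variant of Lemma~\ref{l:key} perturbing one $\tT_B$ instead of $\bB_0$ would also absorb the remaining case $T=3$, where $v=3^{k+1}$ and one is back to the rank $v-(k+1)$ of \STS$(3^{k+1})$ studied by Jungnickel et al.
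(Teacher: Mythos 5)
Your proposal is correct and essentially coincides with the paper's proof, which likewise repeats the argument of Theorem~\ref{th:1} with the subsystem on $S_0$ no longer free but forced by Lemma~\ref{l:key} (whence the exponent $M-1$), together with the same bound $T!^M\cdot|\mathrm{AGL}(k,3)|$ on how many members of the family can share an isomorphism class. The obstacle you flag is settled by exactly your second suggested route: in the paper's proof of Lemma~\ref{l:key} the forced subsystem is $\bB_0'=\pi(\sigma(\bB_0))$, an isomorphic copy of the initial $\bB_0$, so starting from a resolvable $\bB_0$ (available in the nontrivial case $T\equiv 3\bmod 6$) makes $\bB_0'$ resolvable and Lemma~\ref{l:resol} applies.
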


The proof is the same as for Theorem~\ref{th:1},
with the only difference that
we are not free to choose the subsystem $(S_0,\bB_0)$;
its choice is forced to make
the rank exactly $v-k-1$ in accordance with Lemma~\ref{l:key}.

To prove Lemma~\ref{l:key}, we need two technical results.
The first one generalizes Proposition~\ref{p:any} from STS's to partial STS's of some kind.


\begin{proposition}\label{p:any2}
 Let $(S,\bB)$, $|S|=v$, be an STS orthogonal
 to the vectors $(1,\ldots,1)$ and
 $(0,\ldots,0$, $1,\ldots,1$, $2,\ldots,2)$. If $\bB_0$ be some set of blocks from $\bB$
 restricted by the first $v/3$ points,
 then the dual space of $\bB \setminus \bB_0$ is $G(v,k')$, up to a permutation of points, for some $k'\ge 1$.
\end{proposition}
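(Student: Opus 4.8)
The plan is to reduce the statement to Proposition~\ref{p:any}, by completing the partial system $\bB\setminus\bB_0$ to a genuine Steiner triple system with the same $\mathrm{GF}(3)$-span. All spans below are spans of characteristic vectors over $\mathrm{GF}(3)$, written $\langle\cdot\rangle$.

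First I would fix notation. Write $v=3m$, let $P_0,P_1,P_2$ be the first, second and third blocks of $m$ consecutive points, and note that orthogonality of $\bB$ to $(1,\dots,1)$ and to $(0,\dots,0,1,\dots,1,2,\dots,2)$ is precisely orthogonality to $G(v,1)$. Proposition~\ref{p:standard} (applied with $k=1$, $T=m$) then gives $\bB=\bB^{(0)}\cup\bB^{(1)}\cup\bB^{(2)}\cup\tT$, where each $(P_j,\bB^{(j)})$ is an \STS$(m)$ and $\tT$ is a \TD$(3,m)$ on $P_0,P_1,P_2$; by hypothesis the deleted blocks satisfy $\bB_0\subseteq\bB^{(0)}$. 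Put $W=\langle\bB^{(1)}\cup\bB^{(2)}\cup\tT\rangle$, so that $\langle\bB\setminus\bB_0\rangle=\langle\bB^{(0)}\setminus\bB_0\rangle+W$.

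The reduction is: it suffices to exhibit an \STS$(v)$ $\bB^{*}$ with $\langle\bB^{*}\rangle=\langle\bB\setminus\bB_0\rangle$. Indeed, then $(\bB\setminus\bB_0)^\perp=(\bB^{*})^\perp$; writing $r$ for the $3$-rank of $\bB^{*}$ and $k'=v-1-r$, Proposition~\ref{p:any} provides a Steiner triple system isomorphic to $\bB^{*}$ orthogonal to $G(v,k')$, and since its span has dimension $r=v-k'-1=\dim G(v,k')^\perp$ that span equals $G(v,k')^\perp$; transporting the isomorphism back shows $(\bB^{*})^\perp$, hence $(\bB\setminus\bB_0)^\perp$, is $G(v,k')$ up to a permutation of points. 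Here $k'\ge1$, because $(\bB\setminus\bB_0)^\perp$ still contains the two independent vectors of the hypothesis, so $r\le v-2$. Moreover, deleting $\bB_0$ destroys coverage only of pairs lying inside $P_0$, and any completing block meeting a point outside $P_0$ would doubly cover a transversal pair; hence necessarily $\bB^{*}=\bB^{(0),*}\cup\bB^{(1)}\cup\bB^{(2)}\cup\tT$ for some \STS$(m)$ $(P_0,\bB^{(0),*})$, and the requirement becomes $\langle\bB^{(0),*}\rangle+W=\langle\bB^{(0)}\setminus\bB_0\rangle+W$.

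The tool for producing $\bB^{(0),*}$ is the transversal design. For a triple $\tau=\{a_1,a_2,a_3\}\subseteq P_0$ and a point $p\in P_1$, summing the characteristic vectors of the three blocks of $\tT$ through $p$ that meet $P_0$ in $a_1,a_2,a_3$ gives $\chi_\tau+3e_p+\chi_{f_p(\tau)}=\chi_\tau+\chi_{f_p(\tau)}\in\langle\tT\rangle$, where $f_p\colon P_0\to P_2$ is the bijection induced by $\tT$ and the term $3e_p$ vanishes over $\mathrm{GF}(3)$. Hence $\chi_\tau\equiv-\chi_{f_p(\tau)}\pmod W$, and summing over all $p\in P_1$, where $\sum_{p}\chi_{f_p(\tau)}=3\cdot\mathbf 1_{P_2}=0$, yields $m\,\chi_\tau\in\langle\tT\rangle$. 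So when $3\nmid m$ we get $\chi_\tau\in W$ for every triple $\tau\subseteq P_0$, whence $\langle\bB^{(0)}\rangle\subseteq W$; one may then take $\bB^{(0),*}=\bB^{(0)}$, that is $\bB^{*}=\bB$, and in fact $(\bB\setminus\bB_0)^\perp=\bB^\perp$ already.

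The delicate case — and, I expect, the main obstacle — is $3\mid m$, i.e.\ $9\mid v$, where this averaging collapses. What survives is $\mathbf 1_{P_j}\in\langle\bB^{(j)}\rangle$ (since $(m-1)/2\equiv1\pmod3$), so that any $x$ orthogonal to $\bB^{(j)}$ has zero coordinate sum on $P_j$; feeding this back through the congruences $\chi_\tau\equiv-\chi_{f_p(\tau)}$ and the analogous ones obtained by fixing a point of $P_2$ instead of $P_1$, the goal becomes to show that the triples $\{f_p(\tau):p\in P_1\}$, together with $\bB^{(2)}$ and the symmetric family on $P_1$, still span all weight-three vectors, so that again $\langle\bB^{(0)}\rangle\subseteq W$ and $\bB^{*}=\bB$ works. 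Pinning this spanning statement down in full generality — or, for any configuration that resists it, instead choosing $\bB^{(0),*}$ to be some other \STS$(m)$ on $P_0$ whose span already sits inside $W$, or iterating the whole reduction at the smaller order $m$ — is where the real work lies, and is the step I would concentrate on.
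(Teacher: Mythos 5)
Your reduction and the computation for the case $3\nmid m$ are fine (orthogonality to the two given vectors is orthogonality to $G(v,1)$, the decomposition via Proposition~\ref{p:standard} applies, and the telescoping sum over blocks of $\tT$ through a fixed $p\in P_1$ does show $m\,\chi_\tau\in\langle\tT\rangle$, so for $3\nmid m$ the span is unchanged and Proposition~\ref{p:any}, applied with the exact rank, finishes). But the case $3\mid m$, i.e.\ $9\mid v$, is left as an announced open step, and that is precisely the case the paper needs: in Lemma~\ref{l:key} one has $v=3^kT$ with $k\to\infty$, so $m=3^{k-1}T$ is divisible by $3$ whenever $k\ge 2$. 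Worse, the specific fallback you sketch for that case --- proving $\langle\bB^{(0)}\rangle\subseteq W$ so that $\bB^{*}=\bB$ works, or choosing some $\bB^{(0),*}$ whose span ``already sits inside $W$'' --- cannot succeed in general: deleting $\bB_0$ can strictly enlarge the dual space (equivalently, strictly shrink the span), which is exactly the phenomenon recorded in the proof of Lemma~\ref{l:key}, where the dual of $\bB^-$ is $G(v,k')$ with $k'\ge k$ possibly strict, and which Lemma~\ref{l:key} is designed to repair by rotating $\bB_0$. When the dual grows, no STS $\bB^{(0),*}$ on $P_0$ can have $\langle\bB^{(0),*}\rangle+W=\langle\bB^{(0)}\setminus\bB_0\rangle+W$ with span inside $W$ in the way you hope, and more generally producing an \emph{STS} $\bB^{*}$ with the same span as the partial system presupposes knowing what that span (equivalently its dual) is --- which is the content of the proposition you are trying to prove, so the reduction risks circularity.

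The paper avoids all of this by working directly with a generator matrix $M$ of the dual space of $\bB\setminus\bB_0$: it shows that the multiset of columns of $M$ is closed under $(a,b)\mapsto -a-b$ (hence, after normalizing the first column, the columns below the first row form a subgroup, i.e.\ all of $\mathrm{GF}(3)^{k'}$), and that every column value occurs equally often; both claims are proved by a case analysis on whether the relevant columns lie among the first $v/3$ positions, using that only blocks entirely inside the first $v/3$ points may have been deleted. That argument is insensitive to whether $3$ divides $m$ and yields the $G(v,k')$ structure even when the dual has grown. To salvage your approach you would need a genuinely new idea for the $9\mid v$ case; as it stands, the proposal proves only a special case that does not cover the paper's application.
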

\begin{proof}
 Let $M$ be the generator matrix of the dual space of $\bB \setminus \bB_0$, so we only need to prove that $M$ is also the generator matrix of $G(v,k').$ Without loss of generality, we can assume that the first column of $M$ is $(1,0,...,0)^T$.

 Claim $(*)$. If $a$ and $b$ are columns of $M$, then $-a-b$ is also a column of $M$. Let $a$ and $b$ be the $j^{th}$ and $k^{th}$ columns of $M$. Firstly, if neither $a$ nor $b$ is in the first $v/3$ positions of $M$, then there is a block $\{j,k,l\}$ in $\bB \setminus \bB_0$ that contains $j$ and $k$. Since all rows of $M$ are orthogonal to the characteristic vector of this block, the $l^{th}$ column $c$ satisfies $a+b+c=0$, i.e., $c=-a-b$. This proves $(*)$.  Secondly, $a$ is in the first $v/3$ positions but $b$ is not. Because the second element of $a$ is 0, and the second element of $b$ is 1 or 2, so the the second element of $-a-b$ is 2 or 1 over $GF(3)$, it means that $-a-b$ cannot be in the first $v/3$ positions of $M$. Because two of the three columns $a$, $b$ and $-a-b$ are not in the first $v/3$ positions, so if $-a-b$ is in the $l^{th}$ position, $\{j,k,l\}$ will be a block of $\bB \setminus \bB_0$, therefore, $-a-b$ is a column of $M$. Finally, both $a$ and $b$ are in the first $v/3$ positions, then let $c$ be a column of $M$ that does not lie in the first $v/3$ positions. By the second case, $-a-c$ and $-b-c$ are columns of $M$, and they are not in the first $v/3$ positions of $M$. So $-(-a-c)-(-b-c)=a+b-c$ is a column of $M$, we know its not in the first $v/3$ positions because its second element is not 0. Therefore, $-(a+b-c)-c=-a-b$ is a column of $M$. These prove $(*)$.

 Claim $(**)$. If $c$ and $d$ are columns of $M$, then $c+d-(1,0,...,0)^T$ is also a column of $M$. This is proved by applying $(*)$ with $a=c$, $b=d$ first, and then with $a=-c-d$, $b=(1,0,...,0)^T$.

 The last claim means that the set of columns of the matrix $M^\prime$ obtained from $M$ by removing the first row is closed under addition. Since there are $k^\prime$ linearly independent columns, this set contains all possible $3^{k^\prime}$ columns of height $k^\prime$.

 It remains to prove that different columns $a$ and $b$ occur the same number of times in $M$. Let $J$ and $K$ be the sets of positions in which $M$ has the columns $a$ and $b$, respectively. And let $l$ be a position of the column $-a-b$.

 Case $(1)$, if all $a$s and $b$s are not in the first $v/3$ positions of $M$, then for each $j$ from $J$, there is $k$ from $K$ such that $\{j,k,l\}$ is a block of $\bB \setminus \bB_0$. Moreover, different $j$s correspond to different $k$s. This shows that $|J|\leq|K|$. Similarly, $|K|\leq|J|$.

 Case $(2)$, if $a$s are in the first $v/3$ positions but $b$s are not. By the proof of claim $(*)$, $-a-b$ can't be in the first $v/3$ positions of $M$, so for each $j$ from $J$, there is still $k$ from $K$ such that $\{j,k,l\}$ is a block of $\bB \setminus \bB_0$. Different $j$s correspond to different $k$s, it shows that $|J|\leq|K|$. Similarly, $|K|\leq|J|$.

 Case $(3)$, if all $a$s and $b$s are in the first $v/3$ positions, then let $c$ be a column of $M$ that does not lie in the first $v/3$ positions. Let $A$ be the sets of positions in which $M$ has the column $c$. By case $(2)$, we know $|J|=|A|$ and $|K|=|A|$, so $|J|=|K|$.
\end{proof}


\begin{proposition}\label{p:perm}
For every $t\ge 0$ and $T>3$ divisible by $3^t$,
there is a permutation $\pi$ of the $T$ coordinates
such that the spaces $G(T,t)$ and $\pi(G(T,t))$
intersect in the one-dimensional
subspace spanned by the all-one vector.
\end{proposition}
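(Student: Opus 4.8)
The plan is to treat $G(T,t)$ concretely. Write $T=3^t\cdot R$; after ordering the $T$ coordinates by their ternary-$t$-tuple "digit pattern", $G(T,t)$ is spanned by the all-one vector together with the $t$ vectors $g_1,\dots,g_t$, where $g_s$ is the coordinate function "the $s$-th ternary digit of the block of $R$ coordinates the position falls in". Equivalently, $G(T,t)$ consists exactly of those vectors in $\mathrm{GF}(3)^T$ that are constant on each of the $3^t$ blocks $S_0,\dots,S_{3^t-1}$ \emph{and} whose block-values $(c_0,\dots,c_{3^t-1})$, viewed as a function on $\mathrm{GF}(3)^t$, lie in the span of the constants and the $t$ coordinate functions — i.e. are \emph{affine} functions of the ternary digit vector. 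So $\dim G(T,t)=t+1$, and the "meaning" of $G(T,t)$ is: affine-in-the-first-$t$-digits functions, lifted to be block-constant.

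The core of the argument is the case $R=1$, i.e. $T=3^t$, where $G(T,t)$ is precisely the space of affine functions $\mathrm{AG}(t)\to\mathrm{GF}(3)$. I would choose $\pi$ to be a permutation of $\mathrm{GF}(3)^t$ that is "generic enough" that it sends the pencil of affine functions to a transversal copy. Concretely, the intersection $G(T,t)\cap\pi(G(T,t))$ always contains the all-one vector, so I must produce $\pi$ making the intersection no larger: that is, there is no nonconstant affine $f$ with $f\circ\pi^{-1}$ also affine. Equivalently, writing the affine group $\mathrm{AGL}(t,3)$ for the coordinate permutations preserving $G(T,t)$, I want a $\pi$ such that $\pi^{-1}\,\mathrm{AGL}(t,3)\,\pi$ meets $\mathrm{AGL}(t,3)$ only where the two pencils could still share a vector; more directly I want $\{f \text{ affine}: f\circ\pi \text{ affine}\}$ to be just the constants. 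A clean choice is to take $\pi$ to fix $\mathbf 0$ and act on the remaining $3^t-1$ points in a way that moves affine lines to non-affine sets; for instance, since $T-1=3^t-1\ge 2$ and in fact $3^t-1$ is large, I can pick $\pi$ so that three collinear points $\{x,y,z\}$ (an affine line through configuration) with $x+y+z=0$ get sent to three points that are \emph{not} collinear. Then any $f$ with $f,f\circ\pi^{-1}$ both affine would force $\pi$ to carry every affine line to an affine line, hence $\pi\in\mathrm{AGL}(t,3)$ and $f\circ\pi^{-1}$ ranges over all affine functions — but a single well-chosen $\pi\notin\mathrm{AGL}(t,3)$ breaking one line already kills all nonconstant $f$, because if $f$ is nonconstant the map $x\mapsto(f(x),g_1(x),\dots,g_t(x))$ is a bijection onto $\mathrm{GF}(3)^{t+1}$'s relevant affine subspace and determines $\pi$ up to $\mathrm{AGL}$ once we also demand $f\circ\pi^{-1}$ affine. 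I would make this precise by the counting identity $\dim(G+\pi(G))=2(t+1)-\dim(G\cap\pi(G))$ and checking that for my explicit $\pi$ the sum has dimension $2t+1$ (one can verify the $2t+1$ chosen spanning vectors are independent by a small determinant-type computation, using $t\ge1$ and $3^t>3$ when $t=1$).

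For general $R>1$ with $T=3^tR$, I would reduce to the $R=1$ case by working block-wise: label the $T$ coordinates by pairs $(i,j)$ with $i\in\mathrm{GF}(3)^t$ indexing the $3^t$ blocks and $j\in\{0,\dots,R-1\}$ indexing position within a block, and take $\pi(i,j)=(\sigma(i),j)$ where $\sigma$ is the permutation of $\mathrm{GF}(3)^t$ constructed above. Since every vector of $G(T,t)$ is block-constant, $\pi$ permutes block-constant vectors, and on the block-value function $\pi$ acts exactly as $\sigma$; hence $v\in G(T,t)\cap\pi(G(T,t))$ forces its block-value function to lie in $\mathrm{AG}(t)\cap\sigma(\mathrm{AG}(t))=\langle\mathbf 1\rangle$, so $v$ is the all-one vector (up to scalar). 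The only genuinely load-bearing step is the existence of $\sigma$ on $\mathrm{GF}(3)^t$ breaking the affine pencil; I expect that to be the main obstacle, handled by the explicit "move one line off-line" construction together with the dimension count above, with the edge case $t=1$ (where $3^t-1=2$ and the hypothesis $T>3$, i.e. $R\ge2$, is what makes room) checked separately by hand.
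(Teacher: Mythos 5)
Your overall frame is the same as the paper's (restate the goal as $\dim(G(T,t)+\pi(G(T,t)))=2t+1$ and then exhibit a suitable $\pi$), but the one genuinely load-bearing step is not established, and the justification you offer for it is incorrect. You need a permutation $\sigma$ of $\mathrm{GF}(3)^t$ such that the only affine functions $f$ with $f\circ\sigma$ again affine are the constants. Your argument for its existence is: choose $\sigma$ sending one collinear triple to a non-collinear triple; then any surviving nonconstant $f$ ``would force $\sigma$ to carry every affine line to an affine line, hence $\sigma\in\mathrm{AGL}(t,3)$.'' That implication is false. Counterexample with $t=2$: let $\sigma$ permute the points within each level set of the coordinate function $x_1$, non-affinely (the level sets have $3$ points each, so this is easy). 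Then $x_1\circ\sigma=x_1$ is a nonconstant affine function surviving the pullback, yet $\sigma$ destroys plenty of lines and is not in $\mathrm{AGL}(2,3)$. So ``breaking one line'' does not kill all nonconstant $f$; compatibility of a single functional $f$ with $\sigma$ only constrains $\sigma$ on the level sets of $f$ and says nothing about other directions. To close the gap you must do what you only gesture at: write down an explicit $\sigma$ (or explicit column arrangement) and verify the rank condition by an actual computation. This is precisely what the paper does for $t\ge 2$: it places the columns so that the stacked matrix contains an explicit $(2t+1)\times(2t+1)$ submatrix $\left(\begin{smallmatrix}1&\bar 1&\bar 1\\ \bar 0^T& I_t&2I_t\\ \bar 0^T&C&I_t\end{smallmatrix}\right)$ whose determinant is shown to be nonzero, giving rank $2t+1$ directly.

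A second, structural, problem is the case $t=1$. Your reduction uses a block-preserving permutation $\pi(i,j)=(\sigma(i),j)$, and for $t=1$ no permutation of this kind (indeed no permutation preserving the partition into the three blocks) can work: every permutation of three points is affine ($\mathrm{Sym}_3=\mathrm{AGL}(1,3)$), so the block-value spaces coincide and the intersection stays two-dimensional. The permutation must split a block across blocks, which is exactly what the paper's explicit $t=1$ generator matrices do (its second matrix has second row beginning $0\,1\,\dots$, non-constant on the first block, using $T\ge 6$). Deferring this to ``checked separately by hand'' hides the fact that your general construction has to be abandoned there, not merely specialized. For $t\ge 2$ and $R>1$ your block-wise reduction to the $T=3^t$ case is fine and is a mild repackaging of the paper's argument, but as written the proposal does not prove the proposition.
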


\begin{proof}
Since $\dim(G(T,t))=\dim(\pi(G(T,t)) = t+1$,
the equation $\dim(G(T,t) \cap \pi(G(T,t)) = 1$ is equivalent to $\dim(G(T,t) + \pi(G(T,t))) = 2t+1$.

The case $t=0$ is trivial.

In the case $t=1$ (hence $T\ge 6$),
we can choose $\pi$ such that the generator matrices of $G(T,t)$ and $\pi(G(T,t))$ are of forms
$$
\left(
\begin{array}{cccccccccc}
 1&1&...&1&1&...&1&1&...&1 \\
 0&0&...&0&1&...&1&2&...&2 \\
\end{array}
\right),
\qquad
\left(
\begin{array}{cccccccccc}
 1&1&... \\
 0&1&... \\
\end{array}
\right),
$$
respectively, and we see that the second row of the second matrix is not in $G(T,t)$.

 Assume $t\ge 2$. Denote by $\mathrm{Sym}_T$ the set of all permutations of $T$ coordinates.
 For any $\pi\in \mathrm{Sym}_T$,
 let $M$ and $\pi(M)$ be
 generator matrices of $G(T,t)$ and $\pi(G(T,t))$, respectively,
with the first row being the all-one vector.
 So, the problem is equivalent
 to finding a permutation $\pi\in \mathrm{Sym}_T$
 such that $\dim(G(T,t) + \pi(G(T,t))) = 2t+1$,
 i.e., the rank of the ${2(t+1)\times T}$ matrix $P=\left( \begin{array}{c}
M\\
\pi(M)\\
\end{array}\right )$ is $2t+1$.
Let $P_\pi^\prime=\left( \begin{array}{c}
M^\prime\\
\pi(M^\prime)\\
\end{array}\right )$, where the matrices $M^{\prime}$ and $\pi(M^{\prime})$ 
are obtained from $M$ and $\pi(M)$, respectively, by removing the first row.

We now state that the permutation $\pi$ can be chosen in such a way that $P'_\pi$
has a ${2t\times(2t+1)}$ submatrix
$$Q=\left( \begin{array}{ccc}
\bar{0}^T&I_t & 2I_t\\
\bar{0}^T&C& I_t\\
\end{array}\right ), \qquad \mbox{where }C=\left( \begin{array}{cc}
\bar 0&2\\
 2I_{t-1} &  \displaystyle {{\bar 0}^T \atop 1}\\
\end{array}\right ),$$
$I_j$ is the identity matrix of size $j\times j$, 
and  $\bar{0}$ is the all-zero vector.
Indeed, $M$ contains all possible columns of height $t$. 
As the $2t+1$ columns of the matrix $(\bar{0}^T\  I_t \ 2I_t)$ are different, 
it is a submatrix of $M$. 
Similarly, $(\bar{0}^T\ C\  I_t)$ is a submatrix of $M$,
and we can choose $\pi$ in such a way that $Q$ is a submatrix of $P'_\pi$.

With $\pi$ defined as above, $P_\pi$ contains a $(2t+1)\times (2t+1)$ submatrix
$\left( \begin{array}{ccc}
1&\bar{1}&\bar{1}\\
\bar{0}^T&I_t & 2I_t\\
\bar{0}^T&C& I_t\\
           \end{array}\right )$
of non-zero determinant
$      
           \det\left( \begin{array}{cc}
           I_t&2I_t\\
           C&I_t\\
           \end{array}\right )=\det\left( \begin{array}{cc}
                                  I_t&2I_t\\
                                  0&I_t-2C\\
                                  \end{array}\right )\ne 0.$
So $P_\pi$ has rank $2t+1$ and the spaces $G(T,t)$ and $\pi(G(T,t))$ intersect in a one-dimensional subspace.
\end{proof}

Now, we are ready to proof Lemma~\ref{l:key}.

\begin{proof}[of Lemma~\ref{l:key}]
Assume that $v=3^k T$, $k\ge 1$, $T>3$, and $(S,\bB)$ is an STS of order $v$ and rank at most $v-k-1$.
By Proposition~\ref{p:any}, we can assume without loss of generality
that $\bB$ is orthogonal to $G(v,k)$.
By Proposition~\ref{p:standard}  and under its notation,
$\bB$ has the following decomposition:
\begin{equation}\label{eq:1''}
 \bB = \bB_{0}  \cup \bB^-, \qquad \mbox{where } \bB^- = \bigcup_{i=1}^{3^k-1} \bB_{i} \cup \bigcup_{B\in AG(k)} \tT_B
\end{equation}
By Proposition~\ref{p:any2}, the dual space of $\bB^-$ is equivalent to $G(v,k')$ for some {$k'\ge k$}
(it should include the dual space $G(v,k)$ of $\bB$),
and without loss of generality assume that it is precisely $G(v,k')$.

Now consider the STS $\bB_0$ of order $T$.
Let its rank be $T-l-1$ for some $l$.
Then its dual space is equivalent to $G(T,l)$,
and we consider a permutation $\sigma$ of coordinates $0$, \ldots, $T-1$ such that
the dual space of $\sigma(\bB_0)$ is precisely $G(T,l)$.
We now define $t:=\max(l,k'-k)$ and take a permutation $\pi$ from Proposition~\ref{p:perm},
such that $$\dim(G(T,t) \cap \pi(G(T,t)))=1.$$

We state that the conclusion of the lemma is satisfied 
with $\bB_0'= \pi(\sigma(\bB_0))$.
That is, the rank of the system $\bB'$ obtained from $\bB$ 
with replacing $\bB_0$ by $\bB_0'$
is exactly $v-k-1$. 
To see this, 
we assume for the contrary that the dual space of $\bB'$
has a vector $x$ in $G(v,k') \setminus G(v,k)$.
Then the values in the first $T$ coordinates
of $x$ are not constant,
and they form a vector from $G(T,k'-k)$ 
(and hence, from $G(T,t)$), say $y$.
By Proposition~\ref{p:perm}, 
it does not belong to $\pi(G(T,t))$,
and hence it does not belong to $\pi(G(T,l))$,
i.e., it is not orthogonal to $\pi(\sigma(\bB_0))=\bB_0'$.
Therefore $x$ is not orthogonal to $\bB'$, a contradiction.
So, by rotating only one sub-STS of order $T$, we excluded all orthogonal vectors that are not in $G(v,k)$.
\end{proof}

\section{Conclusion}

In this paper, we have proven a hyperexponential (in $k$) lower bound
for the number of isomorphism classes of resolvable STS$(3^k T)$
of $3$-rank $3^k T - k -1$, for most (but not all) values of odd $T>3$. We separately solved the cases
$T\equiv 1\bmod 6$ (Theorem~\ref{th:2}), $T\equiv 15\bmod 18$ (Theorem~\ref{th:1}), and $T\equiv 3,9\bmod 18$ (Theorem~\ref{th:1'}).
The case $T=3$ was previously considered in \cite{JMTW:STS27}; the case $T=1$ corresponds to the affine geometry,
which is a unique STS$(3^k)$ of $3$-rank $3^k-k-1$.
Theorems~\ref{th:1} and~\ref{th:1'} cover all corresponding cases, but Theorem~\ref{th:2} is conditional:
to produce a nontrivial lower bound, it needs at least one resolvable STS$(3T)$ of non-maximum $3$-rank.
If $T$ is a prime power, such STS were constructed in \cite[Theorem~5]{RChWil:kirkman}.
The existence of resolvable STS$(3T)$ of $3$-rank $3T-2$ for the other values of $T$, $T\equiv 1\bmod 6$,
remains an actual research problem.

It would be quite interesting, but expectedly more difficult to consider similar
asymptotics with respect to the $2$-rank. Another interesting problem is the evaluation
of the number of doubly-resolvable STS of limited $3$- or $2$-rank
(an STS is doubly resolvable if there are two resolutions such that two parallel classes from different
resolutions have no more than one block in common).


\providecommand\href[2]{#2} \providecommand\url[1]{\href{#1}{#1}}
  \def\DOI#1{{\small {DOI}:
  \href{http://dx.doi.org/#1}{#1}}}\def\DOIURL#1#2{{\small{DOI}:
  \href{http://dx.doi.org/#2}{#1}}}

\end{document}


\begin{multline*}
    \eta \mod \bmod aaaaaaaaaaaaaaaaaaaaaaa \\ bbbbbbbbbbbbbbbbbbbbbbbb \\ ccccccccccccccccccccccccccc
\end{multline*}

\begin{table}[ht]
\caption{Multi-column table}
\begin{center}
\begin{tabular}{c|c|c}
    \hline
    \multirow{2}{*}{multi-row}&\multicolumn{2}{c}{Multi-column}\\
    \cline{2-3}
    &X&X\\
    \hline
\end{tabular}
\end{center}
\label{tab:multicol}
\end{table}